\newcommand{\R}{\mathbb{R}}
\newcommand{\N}{\mathbb{N}}
\newcommand{\ci}{\mathop{\mbox{\rm{circ}}}\nolimits}
\newcommand{\vo}{\mathop{\mbox{\rm{vol}}}\nolimits}
\newtheorem{thm}{Theorem}
\newtheorem*{thm*}{Theorem}
\newtheorem{thmp}{Theorem}
\newtheorem{lem}[thm]{Lemma}
\theoremstyle{definition}
\newtheorem{Def}[thm]{Definition}
\newtheorem{exa}{Example}
\begin{document}


\title[]{Hermite--Hadamard type inequality for certain Schur convex functions}
\author{P\'al Burai, Judit Mak\'o and Patricia Szokol}
\address{University of Debrecen, 4028, Debrecen, 26 Kassai road, HUNGARY}
\email{burai.pal@inf.unideb.hu}
\address{University of Miskolc, 3515, Miskolc-Egyetemv\'aros, HUNGARY}
\email{matjudit@uni-miskolc.hu}
\address{University of Debrecen, 4028, Debrecen, 26 Kassai road, HUNGARY}
\email{szokol.patricia@inf.unideb.hu}
\keywords{Hermite-Hadamard inequality, Korovkin theorem, positive linear functional, convexity, circulant matrices, doubly stochastic matrices}
\subjclass[2010]{26B25, 41A36, 26D15, 39B62}
\maketitle


\begin{abstract}
 The main goal of this paper is to prove a Hermite-Hadamard type inequality for certain Schur convex functions using, as one of the main tools in the proof, a Korovkin-type approximation theorem. 
\end{abstract}


\section{Introduction}
The classical Hermite--Hadamard inequality \cite{Hadamard1893} is a very important and investigated result concerning convex functions. It states, that if $f\colon [a,b]\to\R$ is convex, then it satisfies the following chain of inequalities
\begin{equation}\label{E:HH}
f\left(\frac{a+b}{2}\right)\le\frac{1}{b-a}\int_a^bf(x)\ dx\le \frac{f(a)+f(b)}{2}.
\end{equation}

In fact, together with continuity, the left-hand side inequality (usually called the lower Her\-mite-Hadamard inequality) implies convexity (see Theorem \ref{T:classical_HH}). Similarly, the upper Hermite-Hadamard inequality together with continuity also implies convexity of $f$.

There are many interesting results concerning this topic. Among others we emphasize that in \cite{Bessenyei2006}, the authors proved an analogous statement concerning convex functions with respect to a Chebyshev system. 

One can find several generalizations of \eqref{E:HH} in the literature, see e.g. \cite{Bessenyei2008a, Bessenyei2003, DragomirAgarwal1998, Niculescu2012, Niculescu2003/04, TsengHwangDragomir2007} and the references therein. 

The main motivation of the present paper comes from \cite{BuraiMako2016}, where the connection between 
a Hermite--Hadamard type inequality and a Jensen type inequality was investigated. 

\begin{thmp}\label{T:BM}
Let $X$ be a normed space, $D\subseteq X$ be a nonempty convex subset of $X$ and $f\colon D\times D\to\R$ be bounded, lower semicontinuous 
and symmetric function. Assume that $\mu$ is a probability measure on 
$[0,1]$, such that $\mu\notin\{\alpha\delta_0+(1-\alpha)\delta_1\mid 
\alpha\in[0,1]\}$. 
Moreover, assume that, for all $x,y\in D,$ the function $f$ satisfies 
the following Hermite--Hadamard type inequality
\begin{equation*}
 \int\limits_{[0,1]}f(tx+(1-t)y,(1-t)y+tx)d\mu(t)\leq f(x,y).
\end{equation*}
Then $f$ is Jensen convex in the following sense:
\[
f\left(\frac{x+y}2,\frac{x+y}2\right)\leq f(x,y), \quad x,y\in D.
\]
\end{thmp}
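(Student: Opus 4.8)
The plan is to restrict $f$ to the segment joining $x$ and $y$, to observe that the hypothesis is \emph{self-improving} and can be iterated, and finally to pass to a limit using only lower semicontinuity. Throughout, the pair $\bigl(tx+(1-t)y,\,(1-t)x+ty\bigr)$ is obtained from $(x,y)$ by the symmetric doubly stochastic transform $\left(\begin{smallmatrix} t & 1-t\\ 1-t & t\end{smallmatrix}\right)$, which is the source of the Schur-convex flavour of the statement. First I would fix $x,y\in D$ and set
\[
\psi(t):=f\bigl(tx+(1-t)y,\,(1-t)x+ty\bigr),\qquad t\in[0,1].
\]
Since $f$ is bounded and lower semicontinuous and the parametrisation is continuous, $\psi$ is bounded and lower semicontinuous; the symmetry of $f$ gives $\psi(t)=\psi(1-t)$, so that $\psi(1)=f(x,y)$ and $\psi(1/2)=f\!\left(\tfrac{x+y}2,\tfrac{x+y}2\right)$. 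Thus it suffices to prove $\psi(1/2)\le\psi(1)$.

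The key step is to feed the hypothesis into itself. Applying it to the pair $\bigl(tx+(1-t)y,(1-t)x+ty\bigr)$ in place of $(x,y)$ and composing the two doubly stochastic transforms, a direct computation using $st+(1-s)(1-t)=:r$ and $s(1-t)+(1-s)t=1-r$ yields, for every $t\in[0,1]$,
\[
\int_{[0,1]}\psi\bigl(st+(1-s)(1-t)\bigr)\,d\mu(s)\le\psi(t).
\]
Then I would perform the affine change of variables $w=2t-1$, $u=2s-1$ and put $g(w):=\psi\bigl(\tfrac{w+1}2\bigr)$. Using the identity $2\bigl(st+(1-s)(1-t)\bigr)-1=(2s-1)(2t-1)$, the inequality above becomes
\[
\int_{[-1,1]}g(uw)\,d\rho(u)\le g(w),\qquad w\in[-1,1],
\]
where $\rho$ is the push-forward of $\mu$ under $s\mapsto 2s-1$. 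Here $g$ is bounded and lower semicontinuous with $g(1)=f(x,y)$ and $g(0)=f\!\left(\tfrac{x+y}2,\tfrac{x+y}2\right)$, so the goal becomes $g(0)\le g(1)$; moreover the hypothesis $\mu\notin\{\alpha\delta_0+(1-\alpha)\delta_1\}$ translates exactly into $\rho\bigl((-1,1)\bigr)=\mu\bigl((0,1)\bigr)>0$.

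Iterating the last inequality is transparent in the multiplicative variable. Taking $w=1$ gives $\int g\,d\rho\le g(1)$, and replacing $w$ by a product $u_1\cdots u_n$ and integrating shows that $J_n:=\int_{[-1,1]^{\,n}}g(u_1\cdots u_n)\,d\rho^{\otimes n}$ is non-increasing in $n$; hence $J_n\le g(1)$ for all $n$. If $\rho_n$ denotes the law of the product $u_1\cdots u_n$ with $u_i$ independent and $\rho$-distributed, then $\rho_n\Rightarrow\delta_0$ weakly: writing $\log|u_1\cdots u_n|=\sum_{i=1}^n\log|u_i|$ and noting that $\int\log|u|\,d\rho(u)\in[-\infty,0)$ (strictly negative because $\rho$ charges $(-1,1)$, where the integrand is negative), the strong law of large numbers forces $u_1\cdots u_n\to0$ almost surely. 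Since $g$ is lower semicontinuous and bounded below, the Portmanteau theorem (equivalently, Fatou's lemma) gives
\[
f\!\left(\tfrac{x+y}2,\tfrac{x+y}2\right)=g(0)\le\liminf_{n\to\infty}\int_{[-1,1]}g\,d\rho_n=\liminf_{n\to\infty}J_n\le g(1)=f(x,y).
\]

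The hard part will be precisely this last limit transition: one must replace point evaluation at the midpoint by a sequence of averaging operators and control the limit with lower semicontinuity alone. The decisive simplification is that the composed symmetric doubly stochastic transforms linearise, after the substitution $w=2t-1$, into the plain product $u_1\cdots u_n$; convergence of these averaging operators to evaluation at the midpoint is exactly where a Korovkin-type approximation theorem enters (the route suggested by the abstract), while the strong law furnishes the same conclusion directly.
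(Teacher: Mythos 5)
Your proof is correct, including your tacit correction of the misprint in the displayed hypothesis: the second argument should read $(1-t)x+ty$ (the image of $(x,y)$ under the symmetric doubly stochastic matrix $\ci(t,1-t)$), as in \cite{BuraiMako2016} and as in the multivariate Theorem \ref{T:main_theorem}.

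Up to the limit step you follow the same skeleton as the intended argument: your composition identity $2\bigl(st+(1-s)(1-t)\bigr)-1=(2s-1)(2t-1)$ is precisely the $n=2$ case of Lemma \ref{L:ciklikus_szorzata}, which expresses the deviation of $\ci(c_1)\cdots\ci(c_m)$ from the constant-$\frac1n$ matrix as products of deviations, and your $J_n$ are exactly the positive linear functionals $\mathcal{T}_n$ of the paper, restricted to the segment. Where you genuinely diverge is the passage to the limit. The paper's route (Theorem \ref{T:HM}, from \cite{MakoHazy2017}) checks only that $\mathcal{T}_m\textbf{1}=1$ and that $\mathcal{T}_m g_0\to0$ for a single peak function, e.g. $g_0(w)=|w|$ in your coordinates, for which independence gives $\mathcal{T}_m g_0=\bigl(\int_{[0,1]}|2s-1|\,d\mu(s)\bigr)^m\to0$ geometrically, since $\mu((0,1))>0$ forces $\int_{[0,1]}|2s-1|\,d\mu<1$; the Korovkin-type theorem for bounded lower semicontinuous functions then yields $\mathcal{T}_m h\to h(1/2)$ wholesale. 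You instead prove $\rho_n\Rightarrow\delta_0$ via the strong law of large numbers and finish with Fatou/Portmanteau for lower semicontinuous functions bounded below; this is valid (your treatment of the possibly infinite mean of $\log|u|$ is sound), and it even recovers the Korovkin conclusion for all bounded l.s.c.\ test functions at once. But the SLLN is heavier artillery than needed: the one-line moment bound $\int|u_1\cdots u_n|\,d\rho^{\otimes n}=\bigl(\int|u|\,d\rho\bigr)^n\to0$ plus Markov's inequality already gives $\rho_n\Rightarrow\delta_0$, and that moment computation is exactly the Korovkin test-function computation, so the two mechanisms coincide at bottom. What each approach buys: the Korovkin formulation isolates a reusable lemma and is the one that scales to $n$ variables, where the circulant products no longer linearise into scalar products and one must instead estimate the peak function $\sum_j\bigl|t^j-\frac1n\bigr|$ through the ratio $n\mathcal{I}_n/\vo S_n<1$ (the bulk of Section 4); your probabilistic argument is self-contained and elementary precisely because for $n=2$ the composed symmetric doubly stochastic transforms collapse to the products $u_1\cdots u_n$.
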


A simple consequence of this theorem is the following classical result (see \cite{Hadamard1893}).
\begin{thmp}\label{T:classical_HH}
Let $I$ be a proper interval and $f:I\to\R$ be a continuous function on $I$. Then $f$ is convex if and only if
\[
\int_0^1f(tx+(1-t)y)dt\leq \frac{f(x)+f(y)}{2}, \qquad x,y\in I.
\]
\end{thmp}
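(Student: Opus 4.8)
The plan is to derive this classical result as a direct consequence of Theorem~\ref{T:BM}. The key observation is that the statement of Theorem~\ref{T:classical_HH} is univariate, but Theorem~\ref{T:BM} is formulated for symmetric bivariate functions $f\colon D\times D\to\R$. So my first step is to build an appropriate bivariate function out of the given univariate $f\colon I\to\R$. I would set $X=\R$, $D=I$, and define $F\colon I\times I\to\R$ by $F(u,v)=\tfrac12\bigl(f(u)+f(v)\bigr)$. This $F$ is manifestly symmetric, and it inherits continuity (hence lower semicontinuity) and local boundedness from $f$; on a compact subinterval it is bounded, which is all that is needed when one localizes the argument. The choice of measure is the other half of the setup: I would take $\mu$ to be Lebesgue measure on $[0,1]$, which indeed is not a convex combination of the Dirac masses $\delta_0$ and $\delta_1$, so the hypothesis $\mu\notin\{\alpha\delta_0+(1-\alpha)\delta_1\}$ is satisfied.

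Next I would translate both inequalities through this dictionary. For the forward direction, suppose $f$ is convex. Evaluating the integral hypothesis of Theorem~\ref{T:BM} with $F$ and Lebesgue measure, and using the symmetry $F(a,b)=F(b,a)$, the integrand becomes
\[
F\bigl(tx+(1-t)y,\,(1-t)y+tx\bigr)=f\bigl(tx+(1-t)y\bigr),
\]
so the integral inequality reads exactly
\[
\int_0^1 f\bigl(tx+(1-t)y\bigr)\,dt\le F(x,y)=\frac{f(x)+f(y)}{2},
\]
which is the upper Hermite--Hadamard inequality \eqref{E:HH}. This direction is the standard consequence of convexity and can be obtained directly, so the real content is the converse.

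For the converse, I assume the integral inequality of Theorem~\ref{T:classical_HH} holds and want to conclude convexity. I would verify that this inequality is precisely the Hermite--Hadamard hypothesis of Theorem~\ref{T:BM} for the function $F$ constructed above, again using the symmetry collapse of the integrand to $f(tx+(1-t)y)$ and the identity $F(x,y)=\tfrac12(f(x)+f(y))$. Theorem~\ref{T:BM} then yields that $F$ is Jensen convex in the stated bivariate sense, i.e.
\[
F\!\left(\frac{x+y}{2},\frac{x+y}{2}\right)\le F(x,y),
\]
which unwinds to $f\!\left(\frac{x+y}{2}\right)\le\tfrac12\bigl(f(x)+f(y)\bigr)$, the Jensen convexity of $f$. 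Finally, since $f$ is assumed continuous, Jensen convexity upgrades to full (midpoint-to-continuous) convexity by the classical theorem of Jensen.

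The main obstacle I anticipate is the bookkeeping around the boundedness hypothesis: Theorem~\ref{T:BM} requires $f$ (here $F$) to be bounded on all of $D\times D$, but a continuous convex function on a general proper interval $I$ need not be bounded. The clean way around this is to run the whole argument on an arbitrary compact subinterval $[\alpha,\beta]\subseteq I$, where $F$ is automatically bounded and the convexity conclusion is a local (indeed pointwise midpoint) statement; since $x,y$ and the midpoint $(x+y)/2$ all lie in such a subinterval, no generality is lost and the Jensen inequality is established for every pair $x,y\in I$. Apart from this localization, the proof is essentially a verification that the bivariate machinery of Theorem~\ref{T:BM} specializes correctly, together with an appeal to the standard fact that continuity plus Jensen convexity gives convexity.
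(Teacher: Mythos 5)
Your proof is correct and takes precisely the route the paper intends: Theorem~\ref{T:classical_HH} is presented there as a direct consequence of Theorem~\ref{T:BM}, obtained exactly by your specialization $F(u,v)=\tfrac12\bigl(f(u)+f(v)\bigr)$ with $\mu$ equal to Lebesgue measure on $[0,1]$, followed by the classical Bernstein--Doetsch/Jensen upgrade from midpoint convexity to convexity via continuity. Your localization to compact subintervals $[\alpha,\beta]\subseteq I$ to secure the boundedness hypothesis of Theorem~\ref{T:BM} is a detail the paper leaves implicit, and you handle it correctly.
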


One of the key tools in the proof of Theorem \ref{T:BM} is the following Korovkin-type theorem from \cite{MakoHazy2017}.  

\begin{thmp}\label{T:HM}
Let $\mathcal{T}_m:\mathcal{C}([0,1])\to \R$ $(m\in\N)$ be a sequence of positive linear operators such that
\[
    \lim_{m\to\infty}(\mathcal{T}_m \textbf{1})=1,
\]
where $\textbf{1}$ stands for the constant 1 function.
Suppose that there exists a function $g\in \mathcal{C}([0,1])$ with $g\left(\frac12\right)=0$ and $g>0$ on $[0,1]\setminus \left\{\frac12\right\}$ such that $\lim\limits_{m\to\infty}(\mathcal{T}_mg)=0$. Then, for all bounded lower semicontinuous function $h\colon[0,1]\to\R$,
\[
    \lim_{m\to\infty}\mathcal{T}_m h = h\left(\tfrac12\right).
\]
\end{thmp}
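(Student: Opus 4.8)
The plan is to run the classical Korovkin sandwich argument, adapted to the degenerate situation in which the limiting functional is point evaluation at $\tfrac12$. I would first fix a bounded $h$ with $|h|\le M$ and, for a given $\varepsilon>0$, seek a constant $c_\varepsilon>0$ such that the two-sided pointwise estimate
\[
 h(\tfrac12)-\varepsilon-c_\varepsilon\, g(x)\le h(x)\le h(\tfrac12)+\varepsilon+c_\varepsilon\, g(x),\qquad x\in[0,1],
\]
holds. Applying the positive linear functional $\mathcal{T}_m$, which preserves these inequalities, and letting $m\to\infty$, the hypotheses $\mathcal{T}_m\mathbf{1}\to1$ and $\mathcal{T}_m g\to0$ collapse both outer terms and yield $h(\tfrac12)-\varepsilon\le\liminf_m\mathcal{T}_m h\le\limsup_m\mathcal{T}_m h\le h(\tfrac12)+\varepsilon$; then $\varepsilon\downarrow0$ finishes the argument. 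The construction of $c_\varepsilon$ is exactly where regularity is used: on a neighborhood $U$ of $\tfrac12$ one controls $|h(x)-h(\tfrac12)|$, while off $U$ one has $g\ge\eta>0$ (as $g$ is continuous and strictly positive on the compact set $[0,1]\setminus U$), so that a choice like $c_\varepsilon:=2M/\eta$ works. For continuous $h$ both bounds are available and this settles that case completely.

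To pass to a bounded lower semicontinuous $h$, I would represent each $\mathcal{T}_m$ by a positive Borel measure $\mu_m$ via the Riesz theorem, writing $\mathcal{T}_m h=\int_{[0,1]}h\,d\mu_m$, and record the concentration of these measures: for every neighborhood $U$ of $\tfrac12$, with $\eta=\min_{[0,1]\setminus U}g>0$ one gets $\eta\,\mu_m([0,1]\setminus U)\le\mathcal{T}_m g\to0$, hence $\mu_m([0,1]\setminus U)\to0$ and, together with $\mathcal{T}_m\mathbf{1}\to1$, $\mu_m(U)\to1$. Such an $h$ is the pointwise supremum of an increasing sequence of continuous functions $g_k\nearrow h$ with $g_k(\tfrac12)\to h(\tfrac12)$; by positivity $\mathcal{T}_m g_k\le\mathcal{T}_m h$, so the continuous case already proved gives
\[
 g_k(\tfrac12)=\lim_{m\to\infty}\mathcal{T}_m g_k\le\liminf_{m\to\infty}\mathcal{T}_m h ,
\]
and letting $k\to\infty$ produces the lower estimate $\liminf_m\mathcal{T}_m h\ge h(\tfrac12)$.

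The main obstacle is the reverse (upper) estimate $\limsup_m\mathcal{T}_m h\le h(\tfrac12)$. Unlike the lower estimate it cannot be produced by monotone approximation, since a lower semicontinuous function is in general \emph{not} the limit of a decreasing sequence of continuous majorants, and correspondingly the right-hand inequality of the sandwich in the first paragraph is the delicate one near $\tfrac12$. The natural route is the splitting $\mathcal{T}_m h=\int_U h\,d\mu_m+\int_{[0,1]\setminus U}h\,d\mu_m$: the second integral is dominated by $M\,\mu_m([0,1]\setminus U)\to0$, so everything reduces to bounding $\int_U h\,d\mu_m$ from above by $h(\tfrac12)+\varepsilon$ using $\mu_m(U)\to1$. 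This is the crux: lower semicontinuity only furnishes a \emph{lower} bound for $h$ near $\tfrac12$, so the required upper control of $\int_U h\,d\mu_m$ must be extracted from a finer analysis of how the mass of $\mu_m$ is distributed within $U$, and this is precisely the step where the full strength of the hypotheses on $g$ and of the behaviour of $h$ at the centre $\tfrac12$ has to be exploited.
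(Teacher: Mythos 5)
Your first paragraph is, up to notation, exactly the argument this paper itself gives: the paper never proves Theorem \ref{T:HM} (it is quoted from \cite{MakoHazy2017}), but its own proof of the analogous Theorem \ref{T:Korovkin} is precisely your sandwich, with the function $\varphi(x)=\bigl(\varepsilon-|h(x)-h(p)|\bigr)/g(x)$ bounded below by the truncated minimum $\tilde{L}$ playing the role of $-c_\varepsilon$. So for continuous $h$ you and the paper coincide, and note that in the body of the paper only this continuous version is ever used: the main theorem assumes $f$ continuous, so $h(c)=f(\ci(c)x)$ is continuous and the lower semicontinuous refinement is never invoked. Your monotone-approximation step is also correct and gives $\liminf_{m\to\infty}\mathcal{T}_m h\ge h\left(\tfrac12\right)$ for bounded lsc $h$, which is the half of the conclusion that Hermite--Hadamard-type applications actually need (the hypothesis there supplies the upper bound $\mathcal{T}_m h\le f(x)$ directly).

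The gap you flag at the end, however, is not a missing refinement but an impossibility: the upper estimate $\limsup_{m\to\infty}\mathcal{T}_m h\le h\left(\tfrac12\right)$ is false under the stated hypotheses, so no finer analysis of how the mass of $\mu_m$ sits inside $U$ can close it. Take $\mathcal{T}_m h=h\left(\tfrac12+\tfrac1m\right)$, i.e.\ the evaluation functionals with representing measures $\mu_m=\delta_{1/2+1/m}$ (this is the canonical extension to bounded Borel functions, consistent with your Riesz-representation framework). These are positive and linear, $\mathcal{T}_m\mathbf{1}=1$, and $\mathcal{T}_m g=g\left(\tfrac12+\tfrac1m\right)\to 0$ for every admissible $g$ by continuity. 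Now let $h=1$ on $[0,1]\setminus\left\{\tfrac12\right\}$ and $h\left(\tfrac12\right)=0$, a bounded lower semicontinuous function; then $\mathcal{T}_m h\equiv 1\not\to h\left(\tfrac12\right)=0$. The structural reason is the one you half-identified: the right-hand inequality of your sandwich requires upper semicontinuity of $h$ at the peak point (away from a neighborhood, boundedness plus $g\ge\eta$ suffices), and lsc supplies only the left-hand one; moreover, validity of the full limit for \emph{every} bounded lsc $h$ would force the atomic concentration $\mu_m\left(\left\{\tfrac12\right\}\right)\to 1$, which the Korovkin hypotheses (giving only $\mu_m([0,1]\setminus U)\to 0$ for neighborhoods $U$) cannot deliver. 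So the statement as transcribed should be read either with the conclusion weakened to $h\left(\tfrac12\right)\le\liminf_{m\to\infty}\mathcal{T}_m h$ --- which your argument proves completely --- or with $h$ additionally (upper semi)continuous at $\tfrac12$, in which case your first-paragraph argument already finishes the proof.
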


We recall, the statement of the classical Korovkin theorem. Let $I$ be a bounded, closed interval and $L_m\colon \mathcal{C}(I) \to \mathcal{C}(I),\ m\in\N$ be a sequence of positive, linear mappings. If $L_mf\to f$ in supremum norm for the functions $\textbf{1}, x, x^2$, then $L_mf\to f$ for all $f\in\mathcal{C}(I)$. 

As a matter of fact, in \cite{Korovkin1953} Korovkin obtained a result, which has a consequence similar to the previous one. More precisely, if the sequence $L_m$ defined on the set of all continuous functions that are periodic with period $2\pi$, and $\|f-L_mf\|_{\infty}\to 0$ for the constant 1, cosine and sine functions, then $\|f-L_mf\|_{\infty}\to 0$ for every periodic function $f$. For more details see e.g. \cite{Bohman1952, Komornik2016, Korovkin1953}.

One of the most important consequences of the classical Korovkin theorem is the well-known first approximation theorem of Weierstrass, which says that all continuous functions defined on a compact interval can be approximated uniformly by polynomials. Besides that fundamental theorem of approximation theory, there are many other variants of Korovkin type theorems in the literature. For more different versions, generalizations, and applications we refer to the papers \cite{Altomare1994, Korovkin1960, MakoPales2012, MakoHazy2017} and the references therein.

In the main theorem of this paper we are going to prove a result, which can be considered as a multivariable case of Theorem \ref{T:BM}. More precisely, we will investigate symmetric, continuous functions whose domain is in $\R^n$ that satisfies a Hermite--Hadamard type inequality. Our aim is to confirm that such functions are necessarily Jensen-convex. 

One of the main tools, just as in the case of two-variables functions, is  a Korovkin-type theorem (Theorem \ref{T:Korovkin}). However, we need also some lemmas, and the proof of the main result is quite lengthy and more difficult from technical point of view than the proof of Theorem \ref{T:BM}.

Our work is organized as follows. Section 2 recalls some known results, which will be important for our later purposes, and introduces notations and terminology used throughout this paper. Section 3 contains the main result and a Korovkin-type theorem together with its proof. Section 4 is devoted to the proof of the main theorem. Finally, Section 5 presents some applications.

\section{Notations and basic results}
 
Let $n\in\N$ be a fixed natural number, $c\in \R^n$ be an arbitrary vector and $(c_m)$ be a sequence in $\R^n$. The $i$th coordinates of $c$ and $c_m$ are denoted by $c^i$ and $c_m^i$, respectively, i.e.
\[
c=\begin{bmatrix}
c^1,&\dots,&c^n
\end{bmatrix},\quad\mbox{and}\qquad
c_m=\begin{bmatrix}
c^1_m,&\dots,&c^n_m
\end{bmatrix}.
\]
An $n\times n$ square matrix is said to be \textit{doubly stochastic} if its elements are all non-negative and all row and column sums are one.

Furthermore, we define a special class of doubly stochastic matrices. For this, let us recall the definition of simplices in $\R^n$. Let $S_n(a^1,\ldots,a^n)=S_n(a)$ be an arbitrary simplex in $\R^n$, where $a^1,\ldots, a^n,$ $a^j>0,$ $j=1,\ldots n$ and
\begin{eqnarray*}
S_n(a)=\left\{c\in\R^n\ \Big|\ \frac{c^1}{a^1}+\ldots +\frac{c^n}{a^n}=1,\ c^j\ge 0,\ j=1,\ldots, n\right\}.
\end{eqnarray*}
The volume of $S_n(a^1,\ldots,a^n)$ is (see e.g. \cite{Bessenyei2008} or \cite{Ellis1976})
\begin{eqnarray}\label{E:volsimplex}
\vo S_n(a)=\int\limits_{0}^{a^1}\ldots\int\limits_0^{a^n\left(1-\frac{c^1}{a^1}-\ldots-\frac{c^{n-1}}{a^{n-1}}\right)}1\ dc^n\ldots dc^1=\frac{\prod_{i=1}^n a^i}{n!}.
\end{eqnarray}

For the sake of simplicity, we denote the standard simplex in $\R^n$, i.e. the simplex with unit sides by $S_n$. In particular, the equation \eqref{E:volsimplex} gives 
\[
\vo S_n=\vo S_n(1,\ldots,1)=\frac{1}{n!}.
\]  
We say, that a matrix is circulant doubly stochastic (see e.g. \cite{Davis1979}) generated by $c=(c^1,\ldots,c^n)\in S_n$ if it has the following form
\[
C=\ci(c)=\ci\begin{bmatrix}
c^1,&\dots,&c^{n}
\end{bmatrix}=\begin{bmatrix}
c^1&c^2&\dots&c^{n-1}&c^{n}\\
c^{n}&c^1&\cdots&c^{n-2}&c^{n-1}\\
\vdots&\vdots&\cdots&\vdots&\vdots\\
c^2&c^3&\dots&c^{n}&c^{1}
\end{bmatrix}.
\]


\section{Main results}

Our main result shows that a certain Hermite--Hadamard type inequality implies a Jensen-type inequality .
\begin{thm}\label{T:main_theorem}
Let $D$ be a nonempty, convex subset of $\R^n$ and $f\colon D\to\R$ be a symmetric, continuous function for which
\begin{equation}\label{E:main_result_assumption}
\frac{1}{\vo S_n}\int\limits_{S_n} f(\ci(c)x)\ d\lambda^n(c)\leq f(x),\qquad x\in D,
\end{equation}
where $\lambda^n$ denotes the Lebesgue measure on $\R^n$.
Then
\begin{equation}\label{E:main_result_statement}
f\left(\frac{x^1+\cdots+x^n}{n}, \dots, \frac{x^1+\cdots+x^n}{n}\right)\leq f(x^1,\dots,x^n),
\end{equation}
for all $(x^1,\dots,x^n)\in D$.
\end{thm}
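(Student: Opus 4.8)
The plan is to deduce \eqref{E:main_result_statement} by iterating the averaging in \eqref{E:main_result_assumption} and letting the number of iterations tend to infinity, the limit being pinned down by the Korovkin-type theorem (Theorem \ref{T:Korovkin}). Throughout I fix $x=(x^1,\dots,x^n)\in D$ and set $c_0=\left(\tfrac1n,\dots,\tfrac1n\right)\in S_n$, so that $\ci(c_0)x=\left(\tfrac{x^1+\cdots+x^n}{n},\dots,\tfrac{x^1+\cdots+x^n}{n}\right)$ is exactly the diagonal point on the left of \eqref{E:main_result_statement}. Since $\ci(c)x$ is a convex combination of permutations of $x$ (Birkhoff's theorem) and the symmetry of $f$ presupposes that $D$ is invariant under coordinate permutations, convexity of $D$ yields $\ci(c)x\in D$, so every expression below is well defined.

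First I would introduce the averaging operator $A$, acting on functions $\varphi\colon D\to\R$ by $(A\varphi)(x)=\tfrac{1}{\vo S_n}\int_{S_n}\varphi(\ci(c)x)\,d\lambda^n(c)$, where $\tfrac{1}{\vo S_n}\,d\lambda^n=:\tilde\lambda$ is a probability measure on $S_n$. Then $A$ is positive and monotone, $A\mathbf{1}=\mathbf{1}$, and \eqref{E:main_result_assumption} reads $Af\le f$ on $D$. Applying the monotone operator repeatedly gives $A^mf\le A^{m-1}f\le\cdots\le f$, hence $(A^mf)(x)\le f(x)$ for all $m\in\N$. The decisive algebraic fact is that the product of two circulant doubly stochastic matrices is again one and corresponds to cyclic convolution of generators, $\ci(c')\ci(c)=\ci(c'\ast c)$ with $c'\ast c\in S_n$. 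Thus, writing $h(c):=f(\ci(c)x)\in\mathcal C(S_n)$, the $m$-fold iterate unfolds as
\[
(A^mf)(x)=\int\limits_{S_n}\!\!\cdots\!\!\int\limits_{S_n}h\big(c^{(m)}\ast\cdots\ast c^{(1)}\big)\,d\tilde\lambda(c^{(m)})\cdots d\tilde\lambda(c^{(1)})=:\mathcal T_m h .
\]
Here $\mathcal T_m$ is a positive linear functional on $\mathcal C(S_n)$ (independent of $x$) with $\mathcal T_m\mathbf{1}=1$, and $h(c_0)=f(\ci(c_0)x)$ is the left-hand side of \eqref{E:main_result_statement}.

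With this set-up I would invoke Theorem \ref{T:Korovkin} for the center $c_0$: it suffices to exhibit a test function $g\in\mathcal C(S_n)$ with $g(c_0)=0$, $g>0$ on $S_n\setminus\{c_0\}$ and $\mathcal T_m g\to0$, for then $\mathcal T_m h\to h(c_0)$ for every bounded lower semicontinuous $h$. The natural candidate is the rescaled squared distance to the center, which decouples on the Fourier side: with $\omega=e^{2\pi i/n}$ and $\hat c(\omega^k)=\sum_{j=1}^n c^j\omega^{(j-1)k}$, Parseval's identity gives
\[
g(c):=n\sum_{j=1}^n\Big(c^j-\tfrac1n\Big)^2=\sum_{k=1}^{n-1}\big|\hat c(\omega^k)\big|^2 ,
\]
which vanishes precisely when all $c^j$ coincide, i.e. at $c=c_0$. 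Since convolution becomes multiplication on the Fourier side, $\big|\widehat{c^{(m)}\ast\cdots\ast c^{(1)}}(\omega^k)\big|^2=\prod_{i=1}^m|\hat c^{(i)}(\omega^k)|^2$, the factors decouple under the product measure and
\[
\mathcal T_m g=\sum_{k=1}^{n-1}\left(\int\limits_{S_n}|\hat c(\omega^k)|^2\,d\tilde\lambda(c)\right)^{\!m}=\sum_{k=1}^{n-1}q_k^{\,m}.
\]
As $|\hat c(\omega^k)|\le\sum_j c^j=1$ with equality only on a $\lambda^n$-null subset of $S_n$, each $q_k<1$ for $k=1,\dots,n-1$, so $\mathcal T_m g\to0$ geometrically. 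Combining the three ingredients yields $f(\ci(c_0)x)=h(c_0)=\lim_{m\to\infty}\mathcal T_m h=\lim_{m\to\infty}(A^mf)(x)\le f(x)$, which is exactly \eqref{E:main_result_statement}.

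The main obstacle is the strictness $q_k<1$ for every nontrivial frequency $k$: this is where the circulant structure is genuinely used, and one must verify that the set $\{c\in S_n:\ |\hat c(\omega^k)|=1\}$ — the generators supported on a single cyclic coset — is indeed $\lambda^n$-negligible. A secondary technical point is to confirm that the iterated push-forward functionals $\mathcal T_m$ meet the precise hypotheses of Theorem \ref{T:Korovkin}, in particular that $g$ above is an admissible test function and that $h$ is bounded and lower semicontinuous, the latter being immediate from the continuity of $f$ on the compact set $S_n$.
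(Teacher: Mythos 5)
Your proposal is correct and shares the paper's overall architecture---iterate the averaging to obtain positive linear functionals $\mathcal{T}_m$ with $\mathcal{T}_m\textbf{1}=1$, choose a test function $g$ vanishing only at the barycenter $c_0=\left(\frac1n,\dots,\frac1n\right)$, and combine Theorem \ref{T:Korovkin} with the iterated inequality $\mathcal{T}_m h\le f(x)$ (the paper's Lemma \ref{L:sok_integral})---but you handle the decisive step, $\mathcal{T}_m g\to 0$, by a genuinely different and cleaner method. The paper takes the $\ell^1$ test function $g(t)=\sum_j\left|t^j-\frac1n\right|$ and grinds: Lemma \ref{L:ciklikus_szorzata} expands the generator of $\ci(c_1)\cdots\ci(c_m)$ into $n^m$ centered products, the triangle inequality and independence yield the estimate $\mathcal{T}_m g\le\left(n\mathcal{I}_n/\vo S_n\right)^m$, and then Lemma \ref{L:integralok} together with the explicit formulas for $I_n(s)$, $J_n(s)$, $K_n(s)$ and an alternating-sum argument establish $n\cdot n!\cdot\mathcal{I}_n<1$. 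You instead take the $\ell^2$ function $g(c)=n\sum_j\left(c^j-\frac1n\right)^2$ and diagonalize: the discrete Fourier transform turns cyclic convolution of generators into multiplication, Parseval gives $g(c)=\sum_{k=1}^{n-1}\left|\hat c(\omega^k)\right|^2$, and the product measure decouples \emph{exactly} into $\mathcal{T}_m g=\sum_{k=1}^{n-1}q_k^m$---an identity rather than an estimate, with no analogue of the paper's $\mathcal{I}_n$ computation needed. The one point you flag as open, $q_k<1$, closes easily: $\left|\hat c(\omega^k)\right|\le\sum_j c^j=1$ with equality only when $c$ is supported on a set $\{j:\omega^{(j-1)k}=z\}$ for a single unit $z$; since $\omega^k\ne1$ for $k=1,\dots,n-1$, these index sets are proper subsets of $\{1,\dots,n\}$, so the equality set is a finite union of proper faces of $S_n$, hence negligible, and $q_k<1$ strictly, giving geometric decay. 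What the paper's route buys is an explicit constant $n\cdot n!\cdot\mathcal{I}_n$ obtained by elementary multivariate calculus; what yours buys is brevity, an exact rate $\max_k q_k$, and a transparent explanation of where the circulant structure actually enters (simultaneous diagonalization by the Fourier matrix), so the argument would extend verbatim to other convolution-type averagings. Two cosmetic remarks: Theorem \ref{T:Korovkin} as stated concludes \eqref{E:allitas} only for $h\in\mathcal{C}(K)$ (the bounded lower semicontinuous version is the one-dimensional Theorem \ref{T:HM}), which suffices here since your $h(c)=f(\ci(c)x)$ is continuous; and the hypothesis $p\in K^{\circ}$ must be read relative to the simplex---the paper's own application has exactly the same feature, and it is harmless because the proof of Theorem \ref{T:Korovkin} never actually uses interiority of $p$.
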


Its proof is based on our second main theorem, which can be considered as a Korovkin-type result. 

For the readers convenience, we recall the definition of monotone and positive operators and the relationship between them.

\begin{Def}
Let $K$ be a compact subset of $\R^n$ and $\mathcal{C}(K)$ denote the set of all continuous real-valued functions on $K$. 
We say, that a functional $\mathcal{T}\colon \mathcal{C}(K)\to \R$ is
\begin{itemize}
\item[(a)] monotone, if $f(x)\le g(x)$, $(x\in K)$ implies that $\mathcal{T}(f)\le \mathcal{T}(g)$;
\item[(b)] positive, if $f(x)\ge 0$, $(x\in K)$ implies that $\mathcal{T}(f)\ge 0$. 
\end{itemize}
\end{Def} 

By the previous definition it is easy to see, that every positive, linear functional is monotone. Indeed, let $f, g\in \mathcal{C}(K)$, such that $f(x)\le g(x)$, $(x\in K)$. Then, $0\le g(x)-f(x)$ $(x\in K)$ and hence $0\le \mathcal{T}(g-f)$. By the linearity of $\mathcal{T}$ we infer $\mathcal{T}(f)\le \mathcal{T}(g)$.  

The reverse implication is trivially not true.

We can formulate now our second main theorem.
\begin{thm}\label{T:Korovkin}
Let $K$ be a compact subset of  $\ \R^n$ with non-empty interior ($K^{\circ}\not=\emptyset$), $\mathcal{T}_m\colon\mathcal{C}(K)\to\R$ be a sequence of positive, linear functionals for which 
\begin{equation}\label{E:operatorsorozat_limesz}
\lim_{m\to\infty}\mathcal{T}_m \textbf{1}=1.
\end{equation}
Moreover, let $p\in K^{\circ},\ g\in \mathcal{C}(K)$ such that
\begin{equation}\label{E:feltetelek_g_re}
g(p)=0,\qquad\mbox{and}\qquad g>0\quad\mbox{on}\quad K\setminus\{p\},
\end{equation}
and
\begin{equation}\label{E:limesz_feltetel_g_re}
\lim_{m\to\infty}\mathcal{T}_mg=0=g(p).
\end{equation}
Then
\begin{equation}\label{E:allitas}
\lim_{m\to\infty}\mathcal{T}_mh=h(p),\qquad h\in\mathcal{C}(K).
\end{equation}
\end{thm}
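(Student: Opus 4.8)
The plan is to follow the classical Korovkin strategy: reduce the statement to controlling $\mathcal{T}_m$ on a single auxiliary function that vanishes at $p$, and then dominate that function above and below by a suitable combination of $\textbf{1}$ and $g$, to which the hypotheses \eqref{E:operatorsorozat_limesz} and \eqref{E:limesz_feltetel_g_re} apply directly. First I would fix $h\in\mathcal{C}(K)$ and use the decomposition
\[
\mathcal{T}_m h - h(p) = \mathcal{T}_m\bigl(h - h(p)\textbf{1}\bigr) + h(p)\bigl(\mathcal{T}_m\textbf{1} - 1\bigr).
\]
By \eqref{E:operatorsorozat_limesz} the second summand tends to $0$, so it suffices to prove $\mathcal{T}_m\varphi\to 0$, where $\varphi := h - h(p)\textbf{1}$ satisfies $\varphi(p)=0$. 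Since positive linear functionals are monotone (as observed just before the theorem), the idea is to squeeze $\varphi$ between $\pm\bigl(\varepsilon\textbf{1} + c\,g\bigr)$ for an appropriate constant $c$.

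The key step is the domination estimate. Given $\varepsilon>0$, continuity of $h$ at $p$ provides $\delta>0$ such that $|\varphi(x)|<\varepsilon$ whenever $x\in K$ and $\|x-p\|<\delta$; write $B=\{x\in K:\|x-p\|<\delta\}$. On the compact set $K\setminus B$, which does not contain $p$, the continuous function $g$ attains a positive minimum $\gamma>0$ by \eqref{E:feltetelek_g_re}, and $\varphi$ is bounded, say $|\varphi|\le M$ on $K$. Since $g\ge 0$ on all of $K$ (again by \eqref{E:feltetelek_g_re}), I would verify on both regions that
\[
-\varepsilon\textbf{1} - \tfrac{M}{\gamma}\,g \;\le\; \varphi \;\le\; \varepsilon\textbf{1} + \tfrac{M}{\gamma}\,g \qquad\text{on } K.
\]
Inside $B$ this follows from $|\varphi|<\varepsilon$ together with $g\ge 0$; outside $B$ it follows from $g\ge\gamma$, whence $\tfrac{M}{\gamma}g\ge M\ge|\varphi|$.

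Finally I would apply the monotone functional $\mathcal{T}_m$ to the right-hand inequality to obtain $\mathcal{T}_m\varphi \le \varepsilon\,\mathcal{T}_m\textbf{1} + \tfrac{M}{\gamma}\,\mathcal{T}_m g$, and pass to the limit using \eqref{E:operatorsorozat_limesz} and \eqref{E:limesz_feltetel_g_re}, which yields $\limsup_{m\to\infty}\mathcal{T}_m\varphi\le\varepsilon$; the left-hand inequality gives $\liminf_{m\to\infty}\mathcal{T}_m\varphi\ge-\varepsilon$ in the same way. As $\varepsilon>0$ is arbitrary, $\mathcal{T}_m\varphi\to 0$, and combining this with the decomposition above gives \eqref{E:allitas}.

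The only genuinely delicate point is the domination estimate, in particular checking it uniformly across the two regions and handling the degenerate case in which $K\setminus B$ is empty — there $\varphi$ is already uniformly smaller than $\varepsilon$ and $g$ plays no role, so $\gamma$ need not be defined. Everything else is the standard $\limsup$/$\liminf$ bookkeeping, and the argument uses the multivariable structure only through the compactness of $K$ and the existence of the interior point $p$.
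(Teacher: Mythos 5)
Your proposal is correct and follows essentially the same route as the paper: both squeeze $h-h(p)$ between $\pm\left(\varepsilon\textbf{1}+c\,g\right)$ (the paper obtains its constant as the minimum $\tilde{L}$ of $\bigl(\varepsilon-|h-h(p)|\bigr)/g$ on $K\setminus B(p,\delta)$, while you take $c=M/\gamma$; these are interchangeable) and then apply the monotone functionals $\mathcal{T}_m$ together with \eqref{E:operatorsorozat_limesz} and \eqref{E:limesz_feltetel_g_re}. If anything, your version is marginally more careful than the paper's at the end, since you make the $\limsup/\liminf$ bookkeeping explicit and note the degenerate case $K\setminus B=\emptyset$, which the paper glosses over.
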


\begin{proof}
Let $h\in\mathcal{C}(K)$, and $\varepsilon>0$ be arbitrary. Let us define the following function
\[
\varphi(x)=\frac{\varepsilon-|h(x)-h(p)|}{g(x)},\qquad x\in K\setminus\{p\}.
\]
Because of \eqref{E:feltetelek_g_re} $\varphi$ is well-defined.

By the continuity of $h$, there is a $\delta$ such that 
\[
|h(x)-h(p)|<\varepsilon\quad\mbox{if}\quad\|x-p\|<\delta.
\]
So, $\varphi>0$ on $B(p,\delta)\setminus\{p\}$, where $B(p,\delta)$ denotes the open ball with center $p$ and with radius $\delta$. 

The function $\varphi$ is continuous on the compact set $K\setminus B(p,\delta)$.  Let us denote by $L$ the minimum of $\varphi$ on $K\setminus B(p,\delta)$ and let $\tilde{L}$ be the minimum of $L$ and zero. Then
\[
\varphi>\tilde{L}\qquad\mbox{on}\qquad K\setminus\{p\}.
\]
Using this latter inequality we have
\[
\frac{\varepsilon-|h(x)-h(p)|}{g(x)}>\tilde{L}\qquad\mbox{on}\qquad K\setminus\{p\}.
\]
This implies
\[
-\varepsilon+\tilde{L}g(x)<h(x)-h(p)<\varepsilon-\tilde{L}g(x).
\]
Applying $\mathcal{T}_m$ on the previous trail of inequalities, using its monotonicity and linearity, we get
\[
-\varepsilon\mathcal{T}_m \textbf{1}+\tilde{L}\mathcal{T}_mg<\mathcal{T}_m h-(\mathcal{T}_m \textbf{1}) h(p)<\varepsilon\mathcal{T}_m \textbf{1}-\tilde{L}\mathcal{T}_mg.
\]  
Taking the limits, because of \eqref{E:operatorsorozat_limesz} and \eqref{E:limesz_feltetel_g_re} we can derive the following inequality
\[
-\varepsilon\leq \mathcal{T}_m h-h(p)\leq \varepsilon,
\]
which entails \eqref{E:allitas}.
\end{proof}

\section{Proof of the Main Theorem}

The proof of Theorem \ref{T:main_theorem} is quite lengthy, and besides of Theorem \ref{T:Korovkin}, some technical lemmas are also needed. This is why we devote a full section to its proof.

\begin{lem}\label{L:sok_integral}
Let $m$ be an arbitrary natural number, $c_1,\dots,c_m\in S_n$. Besides of the assumption of Theorem \ref{T:main_theorem} we have
\begin{equation*}
\frac{1}{(\vo S_n)^m}\int\limits_{S_n}\cdots\int\limits_{S_n} f(\ci(c_1)\cdots\ci(c_m)x)\ d\lambda^n(c_1)\cdots d\lambda^n(c_m)\leq f(x)
\end{equation*}
for every $x\in D$.
\end{lem}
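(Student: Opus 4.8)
The plan is to argue by induction on $m$, the case $m=1$ being exactly the hypothesis \eqref{E:main_result_assumption} of Theorem \ref{T:main_theorem}. It is convenient to think of the left-hand side as the $m$-fold iterate of the averaging operator
\[
(Tf)(x)=\frac{1}{\vo S_n}\int_{S_n}f(\ci(c)x)\,d\lambda^n(c),
\]
so that the quantity in the lemma is $(T^mf)(x)$; the assumption reads $Tf\le f$ on $D$, and the goal is $T^mf\le f$. Since $\frac{1}{\vo S_n}\int_{S_n}1\,d\lambda^n=1$, the normalized measure is a probability measure, which is what makes $T$ order preserving.

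First I would record the two structural facts that make the induction run. On one hand, for $c\in S_n$ the matrix $\ci(c)$ is doubly stochastic, hence by Birkhoff's theorem a convex combination of permutation matrices; as $D$ is convex and, being the domain of a symmetric function, invariant under coordinate permutations, it follows that $\ci(c)x\in D$ whenever $x\in D$ (this is in any case implicit in \eqref{E:main_result_assumption}, whose left-hand side already evaluates $f$ at $\ci(c)x$). Moreover a product of circulant matrices is again circulant and a product of doubly stochastic matrices is again doubly stochastic, so $\ci(c_1)\cdots\ci(c_m)=\ci(w)$ for some $w\in S_n$ and therefore maps $D$ into $D$; this guarantees that every integrand below is well defined. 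On the other hand, each map $(c_1,\dots,c_m)\mapsto f(\ci(c_1)\cdots\ci(c_m)x)$ is continuous, hence bounded and measurable on the compact product $S_n\times\cdots\times S_n$, so Fubini's theorem applies and the iterated integrals may be taken in any order.

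For the inductive step, assume the assertion for $m$ and consider the $(m+1)$-fold average. Using Fubini I would integrate with respect to $c_1$ first, holding $c_2,\dots,c_{m+1}$ fixed. Writing $y:=\ci(c_2)\cdots\ci(c_{m+1})x\in D$, the base inequality \eqref{E:main_result_assumption} applied at the point $y$ gives
\[
\frac{1}{\vo S_n}\int_{S_n}f(\ci(c_1)y)\,d\lambda^n(c_1)\le f(y)=f\bigl(\ci(c_2)\cdots\ci(c_{m+1})x\bigr).
\]
Because this bound holds pointwise in the remaining variables, integrating it over $c_2,\dots,c_{m+1}$ and using monotonicity of the integral reduces the $(m+1)$-fold average to the $m$-fold average of $f(\ci(c_2)\cdots\ci(c_{m+1})x)$, which is $\le f(x)$ by the inductive hypothesis. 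This closes the induction, and equivalently exhibits the chain $T^{m}f\le T^{m-1}f\le\cdots\le Tf\le f$.

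The step requiring the most care is the justification that $y=\ci(c_2)\cdots\ci(c_{m+1})x$ remains in $D$, so that \eqref{E:main_result_assumption} is legitimately applicable at $y$; this is precisely where the convexity of $D$, together with the permutation invariance forced by the symmetry of $f$, is used. The remaining ingredients — the applicability of Fubini and the preservation of the inequality under integration — are routine once one notes that the integrands are continuous, hence bounded, on the compact domain.
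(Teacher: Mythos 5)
Your proof is correct and takes essentially the same approach as the paper, which proves the lemma by induction on $m$ from the base inequality \eqref{E:main_result_assumption} (the paper's own proof is just that one-line remark). Your added care about Fubini and about $\ci(c_2)\cdots\ci(c_{m+1})x$ remaining in $D$ (via the product of circulant doubly stochastic matrices being of the form $\ci(w)$ with $w\in S_n$) merely fills in details the paper leaves implicit.
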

\begin{proof}
It comes from \eqref{E:main_result_assumption} using induction with respect to $m$.
\end{proof}

\begin{lem}\label{L:ciklikus_szorzata}
Let $m$ be an arbitrary natural number, $c_1,\dots,c_m\in S_n$ and
\begin{equation*}
T_m=\ci(c_1)\dots\ci(c_m)=\ci(t_m),
\end{equation*}
then for all $m\ge 2$
\begin{align}\label{E:ciklikus}
t_m^\alpha\!=\!\frac{1}{n}\!+\!\!\!\sum_{i_{m-1}=1}^n\!\dots\!\!\sum_{i_{1}=1}^n\!\!\left(\!c_1^{i_1}-\frac{1}{n}\!\right)\!\!\left(\!c_2^{i_2-i_1+1}-\frac{1}{n}\!\right)\!\cdots\!\left(\!c_m^{\alpha-i_{m-1}+1}-\frac{1}{n}\!\right),
\end{align}
where $\alpha=1,\dots,n$.
\end{lem}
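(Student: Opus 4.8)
The plan is to separate each generator into its ``uniform'' part and its deviation, thereby reducing the claim to an iterated circular convolution. Write $J$ for the $n\times n$ all-ones matrix and set $d_k=c_k-\bigl(\tfrac1n,\dots,\tfrac1n\bigr)$, so that $\ci(c_k)=\tfrac1n J+\ci(d_k)$ and, crucially, $\sum_{i}d_k^i=\sum_i c_k^i-1=0$ because $c_k\in S_n$. I would first record three elementary facts: $\tfrac1n J$ is idempotent, and for any zero-sum vector $d$ one has $J\,\ci(d)=0$ and $\ci(d)\,J=0$, the latter two holding because every row and every column of $\ci(d)$ sums to $\sum_i d^i=0$.

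Expanding the product $T_m=\prod_{k=1}^m\bigl(\tfrac1n J+\ci(d_k)\bigr)$ then collapses dramatically: in any summand that is not homogeneous in the choice of factor there is a pair of adjacent factors of different type, hence a subproduct $\tfrac1n J\cdot\ci(d_k)$ or $\ci(d_k)\cdot\tfrac1n J$, which vanishes and annihilates the whole summand. Only the all-$J$ term, equal to $(\tfrac1n J)^m=\tfrac1n J=\ci\bigl(\tfrac1n,\dots,\tfrac1n\bigr)$, and the all-deviation term $\ci(d_1)\cdots\ci(d_m)$ survive, so that $T_m=\ci\bigl(\tfrac1n,\dots,\tfrac1n\bigr)+\ci(d_1)\cdots\ci(d_m)$ and therefore $t_m^\alpha=\tfrac1n+w^\alpha$, where $w$ denotes the generator of $\ci(d_1)\cdots\ci(d_m)$.

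It then remains to compute $w^\alpha$. Reading off the first row of a product of two circulants and using the cyclic indexing $\ci(a)_{jk}=a^{k-j+1}$ (indices taken in $\{1,\dots,n\}$ modulo $n$) gives the multiplication rule $\ci(a)\ci(b)=\ci(u)$ with $u^\alpha=\sum_{l=1}^n a^l b^{\alpha-l+1}$. Iterating this rule by induction on $m$ yields $w^\alpha=\sum_{i_{m-1}}\cdots\sum_{i_1}d_1^{i_1}d_2^{i_2-i_1+1}\cdots d_m^{\alpha-i_{m-1}+1}$; substituting $d_k^i=c_k^i-\tfrac1n$ reproduces exactly the nested sum in \eqref{E:ciklikus}.

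The step I expect to be the main obstacle is verifying the collapse of the cross terms, i.e. the orthogonality $J\,\ci(d)=\ci(d)\,J=0$, since this is where the hypothesis $c_k\in S_n$ (equivalently $\sum_i d_k^i=0$) does all the work and explains why only the purely uniform and purely fluctuating contributions remain. A purely computational alternative is to run the induction directly on $t_m^\alpha$ via the multiplication rule; there the delicate point is dual, namely that completing the last factor from $c_{m+1}^{\alpha-i_m+1}$ to $(c_{m+1}^{\alpha-i_m+1}-\tfrac1n)$ costs an error proportional to $\sum_{i_m}(c_m^{i_m-i_{m-1}+1}-\tfrac1n)=0$. In either route the genuinely fiddly part is keeping the cyclic index shifts consistent, so that each inner summation is recognizably a full sum of the entries of a single $c_k$.
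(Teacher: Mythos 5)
Your proof is correct, and it takes a genuinely different route from the paper's. The paper argues by direct induction on $m$ at the level of the scalar entries: it verifies $m=2$ by writing $c^i=\frac1n+\left(c^i-\frac1n\right)$ inside the convolution $t_2^\alpha=\sum_i c_1^i c_2^{\alpha-i+1}$, and in the inductive step it multiplies the hypothesized formula for $t_k^\alpha$ by $c_{k+1}^{\alpha-i_k+1}$ and checks, separately, that the two resulting mixed terms vanish because $\sum_{i}\left(c_k^{i}-\frac1n\right)=0$ --- the same zero-sum cancellation you exploit, but tracked term-by-term inside nested sums at every step. Your decomposition $\ci(c_k)=\frac1n J+\ci(d_k)$ lifts this cancellation to the matrix level: the identities $J\,\ci(d_k)=\ci(d_k)\,J=0$ together with the idempotence of $\frac1n J$ annihilate all $2^m-2$ cross terms in one stroke, so the only induction left is the routine iterated-convolution formula for the generator of $\ci(d_1)\cdots\ci(d_m)$. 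What your approach buys is conceptual transparency: it exhibits $T_m$ as a uniform part plus a product of ``fluctuation'' circulants, making it structurally obvious why exactly one correction term survives, whereas the paper's computation must rediscover this cancellation at each inductive step. The modest extra input is that a product of circulants is circulant with cyclically convolved generator, but the paper already presupposes this by writing $T_m=\ci(t_m)$ in the statement of the lemma, so you assume nothing beyond what the statement itself does; moreover your indexing convention $\ci(a)_{jk}=a^{k-j+1}$ (mod $n$) and the rule $u^\alpha=\sum_l a^l b^{\alpha-l+1}$ agree exactly with the paper's $m=2$ computation, so the nested sum you obtain after substituting $d_k^i=c_k^i-\frac1n$ is precisely \eqref{E:ciklikus}.
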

\begin{proof}
We prove the lemma by induction with respect to $m$.  
Let $m=2$, and $T_2=\ci(c_1)\ci(c_2)=\ci(t_2)$.
Applying the circulant property of $\ci(c_2)$ and the fact $\sum_{i=1}^nc_1^i=\sum_{i=1}^nc_2^i=1$, we get that
\begin{align*}
t_2^\alpha&=\sum_{i=1}^n c_1^ic_2^{\alpha-i+1}=\sum_{i=1}^n\left[\frac{1}{n}+\left(c_1^i-\frac{1}{n}\right)\right]\left[\frac{1}{n}+\left(c_2^{\alpha-i+1}-\frac{1}{n}\right)\right]\\
&=\sum_{i=1}^n \left[\frac{1}{n^2}+\frac{1}{n}\left(c_1^i-\frac{1}{n}+c_2^{\alpha-i+1}-\frac{1}{n}\right)+\left(c_1^i-\frac{1}{n}\right)\left(c_2^{\alpha-i+1}-\frac{1}{n}\right)\right]\\
&=\frac{1}{n}+\sum_{i=1}^n \left(c_1^i-\frac{1}{n}\right)\left(c_2^{\alpha-i+1}-\frac{1}{n}\right), \quad \alpha=1,\ldots,n,
\end{align*}
where the index $\alpha$ is taken modulo $n$.

Assume now that \eqref{E:ciklikus} holds for $m=k$ and we are going to show that it holds for $m=k+1$, as well. Let $T_{k+1}=\ci(c_1)\ldots\ci(c_k)\ci(c_{k+1})$. Then, 
\[
t_{k+1}^\alpha=\sum_{i_k=1}^n t_k^{i_k}c_{k+1}^{\alpha-i_k+1},
\]
where by the induction hypothesis we know that for all $\alpha=1,\ldots,n$
\[
t_k^\alpha=\frac{1}{n}+\sum_{i_{k-1}=1}^n\ldots \sum_{i_1=1}^n\left(c_1^{i_1}-\frac{1}{n}\right)\left(c_2^{i_2-i_1+1}-\frac{1}{n}\right)\cdots\left(c_k^{\alpha-i_{k-1}+1}-\frac{1}{n}\right).
\]
Then, 
\begin{align*}
&t_{k+1}^\alpha=\sum_{i_k=1}^n \left[\frac{1}{n}+\sum_{i_{k-1}=1}^n\ldots \sum_{i_1=1}^n\left(c_1^{i_1}-\frac{1}{n}\right)\cdots\left(c_k^{i_k-i_{k-1}+1}-\frac{1}{n}\right)\right]\\
&\times\left[\frac{1}{n}+\left(c_{k+1}^{\alpha-i_k+1}-\frac{1}{n}\right)\right]\\
&=\frac{1}{n}+\frac{1}{n}\sum_{i_k=1}^n\ldots\sum_{i_1=1}^n\left(c_1^{i_1}-\frac{1}{n}\right)\ldots\left(c_{k}^{i_k-i_{k-1}+1}-\frac{1}{n}\right)\\
&+\frac{1}{n}\sum_{i_k=1}^n\left(c_{k+1}^{\alpha-i_k+1}-\frac{1}{n}\right)+\sum_{i_k=1}^n\ldots\sum_{i_1=1}^n\left(c_1^{i_1}-\frac{1}{n}\right)\ldots\left(c_{k+1}^{\alpha-i_k+1}-\frac{1}{n}\right).
\end{align*}
It can be shown that the second and third terms of the previous expression are equal to 0. Indeed, using that $\sum_{i=1}^nc_k^i=1$ we get that
\[
\sum_{i_k=1}^n\left(c_{k}^{i_k-i_{k-1}+1}-\frac{1}{n}\right)=0
\]
and hence, concerning the second term
\begin{align*}
&\frac{1}{n}\sum_{i_k=1}^n\ldots\sum_{i_1=1}^n\left(c_1^{i_1}-\frac{1}{n}\right)\ldots\left(c_{k}^{i_k-i_{k-1}+1}-\frac{1}{n}\right)\\
=&\frac{1}{n}\sum_{i_{k-1}=1}^n\ldots\sum_{i_1=1}^n\left(c_1^{i_1}-\frac{1}{n}\right)\ldots\sum_{i_k=1}^n\left(c_{k}^{i_k-i_{k-1}+1}-\frac{1}{n}\right)=0.
\end{align*}
Similarly, $\sum_{i=1}^nc_{k+1}^i=1$, which implies that the third term is also equal to 0. Consequently, we get that the desired formula for $t_{k+1}^{\alpha}$.
\end{proof}

In the proof of Theorem \ref{T:main_theorem} we need to have an explicit formula of the following two integrals.
Let $s$ be an arbitrary positive real number and $I_n(s)$, $J_n(s)$ denote the integrals 
\[
I_n(s):=\int\limits_0^s \int\limits_0^{1-c^1} \ldots \int\limits_0^{1-c^1-\ldots -c^{n-1}} \textbf{1} \, d\lambda(c^n)\ldots d\lambda(c^2) d\lambda(c^1);
\]
and 
\[
J_n(s):=\int\limits_0^s\int\limits_0^{1-c^1}\ldots\int\limits_0^{1-c^1-\ldots-c^{n-1}} c^1 \,  d\lambda(c^n)\ldots d\lambda(c^2) d\lambda(c^1),
\]
respectively.
\begin{lem}\label{L:integralok}
Let $s\ge 0$ be an arbitrary real number. Then, 
\begin{equation}\label{E:int1}
I_n(s)=-\sum_{i=1}^n \frac{1}{i!(n-i)!}(-s)^i, \quad n\in \N;
\end{equation}
and
\begin{equation}\label{E:int2}
J_n(s)=\sum_{i=1}^n \frac{i}{(i+1)!(n-i)!}(-s)^{i+1}, \quad n\in \N.
\end{equation}
\end{lem}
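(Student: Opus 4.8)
The plan is to exploit that $I_n(s)$ and $J_n(s)$ differ only in the factor ($\textbf{1}$ versus $c^1$) attached to the outermost variable, while the inner $n-1$ integrations over $c^2,\dots,c^n$ are identical in both. Since $c^1$ enters those inner integrals only through the upper limits, I would first carry out the inner block with $c^1$ held fixed. The region traced out by $(c^2,\dots,c^n)$ is $\{c^j\ge 0,\ c^2+\dots+c^n\le 1-c^1\}$, i.e. an $(n-1)$-dimensional simplex with all legs equal to $1-c^1$. By the volume formula \eqref{E:volsimplex} (applied with $a^1=\dots=a^{n-1}=1-c^1$), this inner block evaluates to $\frac{(1-c^1)^{n-1}}{(n-1)!}$.

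With this reduction both quantities collapse to one-dimensional integrals:
\[
I_n(s)=\int_0^s\frac{(1-c^1)^{n-1}}{(n-1)!}\,d\lambda(c^1),\qquad J_n(s)=\int_0^s c^1\,\frac{(1-c^1)^{n-1}}{(n-1)!}\,d\lambda(c^1).
\]
For $I_n(s)$ I would integrate directly (or substitute $u=1-c^1$) to obtain the closed form $I_n(s)=\frac{1-(1-s)^n}{n!}$; expanding $(1-s)^n$ by the binomial theorem and cancelling the constant term against the leading $1$ then yields precisely \eqref{E:int1}.

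For $J_n(s)$ I would rewrite the integrand using $c^1=1-(1-c^1)$, so that $c^1(1-c^1)^{n-1}=(1-c^1)^{n-1}-(1-c^1)^{n}$, and integrate the two powers separately; this gives $J_n(s)=\frac{1}{(n-1)!}\big[\frac{1-(1-s)^n}{n}-\frac{1-(1-s)^{n+1}}{n+1}\big]$, equivalently $J_n(s)=I_n(s)-nI_{n+1}(s)$. Substituting the already-proved form \eqref{E:int1} (or expanding both powers by the binomial theorem) and collecting the coefficient of $(-s)^{i+1}$ produces \eqref{E:int2}.

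The routine but delicate part is the final bookkeeping: after the binomial expansion one must verify that, for $J_n$, the coefficients combine to the stated $\frac{i}{(i+1)!(n-i)!}$ across the full range $i=1,\dots,n$, including the boundary index $i=n$ (where one factor is $0!$); this requires an index shift and some care with the factorials, but presents no conceptual difficulty. As a cross-check one can confirm $I_n(1)=\frac1{n!}=\vo S_n$, recovering the standard simplex volume.
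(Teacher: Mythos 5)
Your proposal is correct and follows essentially the same route as the paper: both recognize the inner $(n-1)$-fold integral as the volume $\frac{(1-c^1)^{n-1}}{(n-1)!}$ of the simplex $S_{n-1}(1-c^1,\dots,1-c^1)$ via \eqref{E:volsimplex}, reduce to one-dimensional integrals, and finish with a binomial expansion. The only (harmless) difference is the final step for $J_n$: the paper expands $c^1(1-c^1)^{n-1}$ binomially and integrates term by term, whereas you write $c^1(1-c^1)^{n-1}=(1-c^1)^{n-1}-(1-c^1)^n$ (equivalently $J_n(s)=I_n(s)-nI_{n+1}(s)$) and expand afterwards --- your coefficient bookkeeping, including the vanishing coefficient of $(-s)^1$ and the boundary term at $i=n$, checks out.
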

Besides the integrals over a simplex $I_n(s)$ and $J_n(s)$, we recall an other nice expression, which will be used in the proof of Theorem \ref{T:main_theorem}. 
Let $s$ be an arbitrary positive real number, then
\begin{equation}\label{E:int3}
K_n(s):=\int\limits_0^s\int\limits_0^{s-c^1}\ldots\int\limits_0^{s-c^1-\ldots-c^n} c^1\  d\lambda(c^n)\ldots d\lambda(c^2) d\lambda(c^1)=\frac{s^{n+1}}{(n+1)!}.
\end{equation}
For more details see e.g \cite{Sobczyk1992}.

Now, we are in a position to prove Lemma \ref{L:integralok}.
\begin{proof}
 It is easy to see, that the integral $I_n(s)$ can be formulated 
 in the following way
\begin{align*}
&I_n(s)=\\
&\int\limits_0^s\left(\int\limits_0^{1-c^1}\, \int\limits_0^{(1-c^1)\left(1-\frac{c^2}{1-c^1}\right)}\ldots\int\limits_0^{(1-c^1)\left(1-\sum\limits_{j=1}^{n-1}\frac{c^j}{1-c^1}\right)} \textbf{1} \, d\lambda(c^n)\ldots d\lambda(c^2)\right) d\lambda(c^1),
\end{align*}
where the integral appearing in the brackets is the volume of the simplex $S_{n-1}(1-c^1,\ldots,1-c^1)$. By \eqref{E:volsimplex} we get 
\[
\vo(S_{n-1}(1-c^1,\ldots,1-c^1))=\frac{(1-c^1)^{n-1}}{(n-1)!}.
\]
Consequently,
\begin{align*}
I_n(s)&=\frac{1}{(n-1)!}\int\limits_0^s (1-c^1)^{n-1} \, d\lambda(c^1)= -\frac{1}{n(n-1)!}\left[(1-c^1)^n\right]_{0}^{s}\\
&= -\frac{1}{n!}\left(\sum_{i=0}^n {n\choose i} (-s)^i-1\right)=-\sum_{i=1}^n \frac{1}{i!(n-i)!}(-s)^i.
\end{align*}

Just as in the case of $I_n(s)$, one can rewrite the integral $J_n(s)$ and apply \eqref{E:volsimplex}
\begin{align*}
&J_n(s)=\int\limits_0^s \int\limits_0^{1-c^1} \ldots\int\limits_0^{1-c^1-c^2-\ldots-c^{n-1}} c^1 \,  d\lambda(c^n)\ldots d\lambda(c^2) d\lambda(c^1)=\\
& \int\limits_0^s \! c^1 S_{n-1}(1-c^1,\ldots,1-c^1) d\lambda(c^1)= \frac{1}{(n-1)!}\int\limits_0^s \!c^1(1-c^1)^{n-1} d\lambda(c^1). 
\end{align*} 
Using the binomial theorem for the expression $(1-c^1)^{n-1}$, we get that
\begin{equation*}
c^1\sum_{j=0}^{n-1}\! {n-1 \choose j}\!(-c^1)^j=-\sum_{j=0}^{n-1}\! {n-1 \choose j}\!(-c^1)^{j+1}=-\sum_{i=1}^{n}\! {n-1 \choose i-1}(-c^1)^i.
\end{equation*}
It implies, that 
\begin{align*}
J_n(s)&=-\frac{1}{(n-1)!} \sum_{i=1}^n \left({n-1\choose i-1}\int\limits_0^s (-c^1)^i \, dc^1\right)\\
&=\sum_{i=1}^n \frac{i}{(i+1)!(n-i)!}(-s)^{i+1}, 
\end{align*}
which completes the proof.
\end{proof}

Now, we are going to present the proof of the Main Theorem \ref{T:main_theorem}.

\begin{proof}[Proof of Theorem \ref{T:main_theorem}]
In the proof we are going to apply the Korovkin-type Theorem \ref{T:Korovkin}. To do this we need to define a sequence of operators $\mathcal{T}_m\colon\mathcal{C}(S_n)\to\R$, $m\in\N$, which satisfies the assumptions appearing in Theorem \ref{T:Korovkin}. Let $h\colon S_n\to \R$.
Then the operators $\mathcal{T}_m$, $m\in \N$ defined by  
\[
\mathcal{T}_m(h)=\frac{1}{(\vo S_n)^m}\int\limits_{S_n}\ldots\int\limits_{S_n} h(\ci(c_1)\cdots\ci(c_m))\ d\lambda^n(c_1)\cdots d\lambda^n(c_m)
\]
are positive, linear on $\mathcal{C}(S_n)$. Moreover,
\[
\mathcal{T}_m(\textbf{1})=\frac{1}{(\vo S_n)^m}\int\limits_{S_n}\cdots\int\limits_{S_n} \textbf{1} \ d\lambda^n(c_1)\cdots d\lambda^n(c_m)=\frac{(\vo S_n)^m}{(\vo S_n)^m}=1,
\]
for all $m\in\N$, i.e. it also satisfies \eqref{E:operatorsorozat_limesz}.

Now, we are going to introduce the function $g\colon S_n\to \R$ defined by 
\begin{equation*}
g(t^1,\ldots,t^n)=\sum_{j=1}^n \left|t^j-\frac{1}{n}\right|, \quad (t^1,\ldots,t^n)\in S_n.
\end{equation*}
Then it is easy to see that $g$ vanishes at $\left(\frac{1}{n},\ldots, \frac{1}{n}\right)$ and $g$ is strictly positive elsewhere. 
To apply Theorem \ref{T:Korovkin} we need to show that $g$ satisfies the assumption \eqref{E:limesz_feltetel_g_re}, that is 
\begin{equation}
\lim_{m\to\infty}\mathcal{T}_mg=0=g\left(\frac{1}{n},\ldots, \frac{1}{n}\right),
\end{equation}
or equivalently
\begin{equation}\label{E:t-limesz}
\lim_{m\to \infty}\frac{1}{(\vo S_n)^m}\int\limits_{S_n}\ldots\int\limits_{S_n} g(t_m^1,\ldots,t_m^n)\ d\lambda^n(c_1)\ldots\lambda^n(c_m)=0,
\end{equation}
where $t_m$ is the first row of $\ci(c_1)\cdots\ci(c_m)$, i.e. 
\[
\ci(c_1)\cdots\ci(c_m)=\ci(t_m).
\]
We are going to apply the squeeze theorem. On the one hand, the above mentioned integral is positive since we take the integral of a nonnegative function over a simplex. On the other hand we verify that it can be majorized by a null sequence. More precisely,
\begin{equation}\label{E:squeeze}
\frac{1}{(\vo S_n)^m}\int\limits_{S_n}\ldots\int\limits_{S_n} g(t_m^1,\ldots,t_m^n)\ d\lambda^n(c_1)\ldots\lambda^n(c_m)\le \left(\frac{n \mathcal{I}_n}{\vo S_n}\right)^m
\end{equation}
where 
\[
\mathcal{I}_n=\int\limits_{S_n}\left|c^1-\frac{1}{n}\right|\ d\lambda^n(c), \quad c\in S_n.
\]
Moreover, the right-hand side of \eqref{E:squeeze} tends to 0, if $m\to \infty$.

Firstly, we show that the inequality \eqref{E:squeeze} holds. 

By Lemma \ref{L:ciklikus_szorzata} we know that 
\[
t_m^\alpha=\frac{1}{n}+\sum_{i_{m-1}=1}^n \dots\sum_{i_{1}=1}^n\left(c_1^{i_1}-\frac{1}{n}\right)\!\!\left(c_2^{i_2-i_1+1}-\frac{1}{n}\right)\cdots\left(c_m^{\alpha-i_{m-1}+1}-\frac{1}{n}\right).
\]
Then, 
\begin{align*}
g(t_m^1,\ldots,t_m^n)&=\sum_{\alpha=1}^n \left|\sum_{i_{m-1}=1}^n \dots\sum_{i_{1}=1}^n\left(c_1^{i_1}-\frac{1}{n}\right)\cdots\left(c_m^{\alpha-i_{m-1}+1}-\frac{1}{n}\right)\right|\\
&\le\sum_{\alpha=1}^n \sum_{i_{m-1}=1}^n \dots\sum_{i_{1}=1}^n\left|\left(c_1^{i_1}-\frac{1}{n}\right)\cdots\left(c_m^{\alpha-i_{m-1}+1}-\frac{1}{n}\right)\right|
\end{align*}
which means that there are $n\cdot n^{m-1}=n^m$ products in which all terms $c_j^i$ are pairwise independent $(j=1,\ldots,m,$ $i=1,\ldots, n)$. Moreover, if $c\in S_n$ is arbitrary, then, it is easy to see, that
\[
\int\limits_{S_n}\left|c_j^i-\frac{1}{n}\right|d\lambda^n(c_j)=\int\limits_{S_n}\left|c^1-\frac{1}{n}\right|d\lambda^n(c)=\mathcal{I}_n
\] 
for any $j=1,\ldots,m$, $i=1,\ldots, n$.
Consequently, 
\begin{align*}
\frac{1}{(\vo S_n)^m}\int\limits_{S_n}\ldots\int\limits_{S_n} g(t_m^1,\ldots,t_m^n)\ d\lambda^n(c_1)\ldots\lambda^n(c_m)\le
\frac{n^m\mathcal{I}_n^m}{\left(\vo S_n \right)^m}.
\end{align*}

Now, we prove that the right-hand side of \eqref{E:squeeze} tends to 0, if $m\to \infty$. Applying Lemma \ref{L:integralok} for $s=\frac{1}{n}$, one can calculate the value of $\mathcal{I}_n$. Let $c\in S_n$. Then, 
\begin{align*}
\mathcal{I}_n&=\int\limits_0^1 \int\limits_0^{1-c^1}\ldots \int\limits_0^{1-c^1-\ldots -c^{n-1}} \left|c^1-\frac{1}{n}\right|\ d\lambda^n(c)\\
&=\int\limits_0^{\frac{1}{n}}\int\limits_0^{1-c^1}\ldots\int\limits_0^{1-c^1-\ldots-c^{n-1}}\left(-c^1+\frac{1}{n}\right)\ d\lambda(c^n)\ldots d\lambda(c^2) d\lambda(c^1)\\
&+\int\limits_{\frac{1}{n}}^1\int\limits_0^{1-c^1}\ldots\int\limits_0^{1-c^1-\ldots-c^{n-1}}\left(c^1-\frac{1}{n}\right)\ d\lambda(c^n)\ldots d\lambda(c^2) d\lambda(c^1).
\end{align*}

The first integral of the right-hand side can separated into two parts and using the notation $I_n(s), J_n(s)$, we get that it equals to $-J_n(1/n)+1/nI_n(1/n)$. Concerning the second integral we introduce a new variable. Let $b=c^1-\frac{1}{n}$, then the second part of $\mathcal{I}_n$ equals to
\begin{align*}
\int\limits_0^{1-\frac{1}{n}}\int\limits_0^{\left(1-\frac{1}{n}\right)-b}\ldots\int\limits_0^{\left(1-\frac{1}{n}\right)-b-\ldots-c^{n-1}} b \ d\lambda(c^n)\ldots d\lambda(c^2) d\lambda(b)=K_n\left(1-\frac{1}{n}\right).
\end{align*}
Consequently, applying Lemma \ref{L:integralok} and \eqref{E:int3} we get that
\begin{align*}
\mathcal{I}_n&=-J_n\left(\frac{1}{n}\right)+\frac{1}{n}I_n\left(\frac{1}{n}\right)+K_n\left(1-\frac{1}{n}\right)\\
&=\frac{1}{n}\left[\sum_{i=1}^n \frac{1}{i!(n-i)!}\left(-\frac{1}{n}\right)^{i}\left(\frac{i}{i+1}-1\right)\right]+\frac{\left(1-\frac{1}{n}\right)^{n+1}}{(n+1)!}.
\end{align*}
Now, we are in a position to prove that 
\[
\frac{n\mathcal{I}_n}{\vo S_n}=n\cdot n!\cdot \mathcal{I}_n<1, \quad \forall n\in\N.
\]
Easy calculation shows that
\begin{align*}
&n\cdot n!\cdot \mathcal{I}_n=n!\left[\sum_{i=1}^n \frac{1}{i!(n-i)!}\left(-\frac{1}{n}\right)^{i}\left(\frac{i}{i+1}-1\right)+\frac{\left(n-1\right)^{n+1}}{n^n(n+1)!}\right]\\
&=\sum_{i=1}^n \frac{n}{i!(n-i)!}\left(-\frac{1}{n}\right)^{i}\left(\frac{i}{i+1}-1\right)+\frac{\left(n-1\right)^{n+1}}{n^n(n+1)}.
\end{align*}
Separating the first sum into two parts and applying the binomial theorem we infer that
\[ 
n\cdot n!\cdot \mathcal{I}_n=\sum_{i=1}^n {n\choose i}\frac{i}{i+1}\left(-\frac{1}{n}\right)^{i}-\left(1-\frac{1}{n}\right)^n+1+\left(1-\frac{1}{n}\right)^n\frac{n-1}{n+1}.
\]
To prove that the previous expression is strictly less than 1, it is enough to show that $n\cdot n!\cdot\mathcal{I}_n-1<0$, i.e.
\[
-\left(1-\frac{1}{n}\right)^n\cdot \frac{2}{n+1}+\sum_{i=1}^n {n\choose i}\frac{i}{i+1}\left(-\frac{1}{n}\right)^{i}<0.
\]
The first term is zero for $n=1$ and negative for $n>1$. Therefore it is enough to show that the second term is negative for $n=1$ and non-positive for $n>1$. 

 The second term starts with $-\frac{1}{2}$ and it consists of alternating terms which are monotone decreasing in absolute value. Indeed, let $i$ be fixed $i=1,\ldots,n-1$. Then, we assert that
\[
\left|\frac{i}{i+1}{n\choose i}\left(-\frac{1}{n}\right)^i\right|>\left|\frac{i+1}{i+2}{n\choose {i+1}}\left(-\frac{1}{n}\right)^{i+1}\right|.
\]
Independently of the parity of $i$, the previous inequality is equivalent to the following one
\[
\frac{i}{i+1}{n\choose i}\frac{1}{n^i}>\frac{i+1}{i+2}{n\choose {i+1}}\frac{1}{n^{i+1}},
\]
which, after simplification, becomes
\[
1>\frac{i+1}{i+2}\cdot\frac{n-i}{n}\cdot\frac{1}{i}. 
\]
The above inequality is trivially true, so is the following one.
\[
\frac{n\mathcal{I}_n}{\vo S_n}<1,
\]
hence
\[
\left(\frac{n\mathcal{I}_n}{\vo S_n}\right)^m\to\infty,\qquad\mbox{as}\quad m\to\infty.
\] 

Consequently, we get that the assumption  \eqref{E:limesz_feltetel_g_re} is also satisfied. 
Applying Theorem \ref{T:Korovkin}, we infer  
\begin{equation}\label{E:lim_assumption}
\lim_{m\to \infty} \mathcal{T}_m h=h\left(\frac{1}{n},\ldots,\frac{1}{n} \right), \quad h\in \mathcal{C}(S_n).
\end{equation}
Let $x=\left(x^1,\ldots,x^n\right)$ be a fixed element of $D$ and $h\colon S_n\to \R$ be defined by 
\[
h(c)=f(\ci(c)x), \quad c\in S_n.
\]
Then, $h\in\mathcal{C}(S_n)$, hence \eqref{E:lim_assumption} holds for $h$, that is to say,
\begin{align*}
\lim_{m\to \infty}\mathcal{T}_m(h)&=
h\left(\frac{1}{n},\ldots,\frac{1}{n} \right)=f\left(\ci\left(\frac{1}{n},\ldots,\frac{1}{n} \right)x\right)\\\\
&=f\left(\frac{x^1+\cdots+x^n}{n},\dots,\frac{x^1+\cdots+x^n}{n}\right)
\end{align*}

Finally, by Lemma \ref{L:sok_integral} we also get that
\begin{align*}
\lim_{m\to \infty}\mathcal{T}_m(h)\le f(x^1,\ldots,x^n),
\end{align*}
which completes the proof.
\end{proof}


\section{Applications and special cases}

Convexity, quasi-convexity and strong convexity have a significant role in optimization theory. Their use is based on the local-global minimum property (see e.g. \cite{Burai2014}, \cite{Burai2013} and the references therein). More precisely, all local minimizers of the previously listed functions are global minimizers as well. 

We deal with only the one-variable case here. The interested reader can derive higher dimensional examples in a pretty similar way.
\subsection{Proving convexity with Theorem \ref{T:main_theorem}}\label{S:proving_convexity}

Let $I\subset\R$ be an open interval, and $g\colon I\to\R$ be a continuous function. If the function $f(x^1,x^2)=g(x^1)+g(x^2)$ fulfils inequality \eqref{E:main_result_assumption}, then, according to Theorem \ref{T:main_theorem}, it also fulfils inequality \eqref{E:main_result_statement}, which takes the following shape:
\[
2g\left(\frac{x^1+x^2}{2}\right)\leq g(x^1)+g(x^2),\qquad x^1,x^2\in I,
\]
which means $g$ is Jensen convex. This implies its convexity together with continuity (see e.g. \cite[Theorem 7.1.1]{Kuczma2009}).

So, we have to check the fulfilment of inequality \eqref{E:main_result_assumption}, that is to say:
\[
\frac{1}{2}\int\limits_0^1\int\limits_0^{1-t}g(sx^1+tx^2)+g(tx^1+sx^2)\ ds\, dt\leq g(x^1)+g(x^2).
\]

In fact, by changing variables we obtain that this inequality is equivalent to the following one
 \[
\int\limits_0^1\int\limits_0^{1-t}g(sx^1+tx^2)\ ds\, dt\leq g(x^1)+g(x^2).
\]

\begin{exa}
Let $g(x)=x^2$, $x\in\R$. Then, we have to show that
\begin{align*}
\int\limits_0^1\int\limits_0^{1-t} \left(sx^1+tx^2\right)^2 \ ds\, dt \leq (x^1)^2+(x^2)^2, \quad x^1,x^2\in \R.
\end{align*}
Calculating the double integral, the previous inequality implies that  
\begin{align*}
\frac{1}{12}\left((x^1)^2+(x^2)^2+x^1x^2\right) \leq (x^1)^2+(x^2)^2,
\end{align*} 
or equivalently, 
\begin{align*}
0\leq (x^1)^2+(x^2)^2-\frac{1}{12}\left((x^1)^2+(x^2)^2+x^1x^2\right), \quad x^1,x^2\in \R.
\end{align*}
To take the complete square of the expression appearing on the right-hand side with respect to $x^1$, we get 
\begin{align*}
\frac{11}{12}\left(x^1-\frac{1}{22}x^2\right)^2+\frac{161}{76}(x^2)^2,
\end{align*}
which is nonnegative. Consequently, the inequality assumption \eqref{E:main_result_assumption} holds, which entails convexity of $g$.
\end{exa}

\begin{exa}
Let us consider the function $g(x)=\frac{1}{x},\ x>0$. Assuming that $0<x^1\le x^2$ we have
\begin{align*}
\int\limits_0^1\!\int\limits_0^{1-t}\frac{1}{sx^1+tx^2}\,ds\,dt\leq
\int\limits_0^1\int\limits_0^{1-t}\frac{1}{(s+t)x^1}\ ds\, dt\le -\frac{1}{x^1} \int\limits_0^1 \ln(t)\ dt =\\\frac{1}{x^1}\leq \frac{1}{x^1}+\frac{1}{x^2} =g(x^1)+g(x^2).
\end{align*}
So, as it is well known, the function $g$ is convex on $]0,\infty[$.
\end{exa}
\begin{exa}
Using a similar calculation to the previous example, one can easily show that the function $g(x)=e^x$ is convex. Indeed, assuming that $x^1\le x^2$ we have
\begin{align*}
\int\limits_0^1\!\int\limits_0^{1-t}e^{sx^1+tx^2}\,ds\,dt\leq
\int\limits_0^1\int\limits_0^{1-t}e^{(s+t)x^2}\ ds\, dt=\frac{1}{2}e^{x^2}\leq \\e^{x^1}+e^{x^2}
\end{align*}
where the right-hand side is equal to $g(x^1)+g(x^2)$.
\end{exa}

\subsection{Proving quasi-convexity with Theorem \ref{T:main_theorem}}

If otherwise not stated, we assume (without losses) throughout the remaining two subsections that $x^1\leq x^2$. 

Let $I\subset\R$ be an open interval, and $g\colon I\to\R$ be a continuous function. If the function $f(x^1,x^2)=\max\{g(x^1),g(x^2)\}$ fulfils inequality \eqref{E:main_result_assumption}, then, according to Theorem \ref{T:main_theorem}, it also fulfils inequality \eqref{E:main_result_statement}, which takes the following shape:

\[
\max\left\{g\left(\frac{x^1+x^2}{2}\right),g\left(\frac{x^1+x^2}{2}\right)\right\}\leq \max\{g(x^1),g(x^2)\},\qquad x^1,x^2\in I,
\]
which means $g$ is Jensen quasi-convex. Using continuity, this implies quasi-convexity of $g$  (see \cite[Remark 1. and Theorem 2.]{NikodemNikodem2009}, \cite[Theorem 2.2.]{Behringer1979} or \cite[Theorem 2. and Corollary 3.]{GilanyiNikodemPales2004}).

So, we have to check the fulfilment of inequality \eqref{E:main_result_assumption}, that is to say:
\[
\frac{1}{2}\int\limits_0^1\int\limits_0^{1-t}\max\{g(sx^1+tx^2),g(tx^1+sx^2)\}\ ds\, dt\leq \max\{g(x^1),g(x^2)\}.
\]
\begin{exa}
Let $g(x)=\log x$. 
\begin{align*}
\frac{1}{2}\int\limits_0^1\int\limits_0^{1-t}\max\{g(sx^1+tx^2),g(tx^1+sx^2)\}\ ds\, dt=\\
\frac{1}{2}\int\limits_0^1\int\limits_0^{1-t}\max\{\log(sx^1+tx^2),\log(tx^1+sx^2)\}\ ds\, dt\leq\\
\frac{1}{2}\int\limits_0^1\int\limits_0^{1-t}\max\{\log(sx^1+tx^1),\log(tx^1+sx^1)\} ds\, dt=\\
\frac{1}{2}\int\limits_0^1\int\limits_0^{1-t} \log((s+t)x^1) ds\, dt=\\-\frac18+\frac14\log x^1\leq\log x^1=\max\{g(x^1),g(x^2)\}.
\end{align*}
So, $g(x)=\log x$ is quasi-convex. Actually, it is concave, hence it is also quasi-concave, which implies that it is quasi-affine.
\end{exa}
\begin{exa}
Let $g(x)=\sqrt{|x|}$. This function is neither convex, nor concave. 
\begin{align*}
\frac{1}{2}\int\limits_0^1\int\limits_0^{1-t}\max\{g(sx^1+tx^2),g(tx^1+sx^2)\}\ ds\, dt=\\
\frac{1}{2}\int\limits_0^1\int\limits_0^{1-t}\max\{\sqrt{|sx^1+tx^2|},\sqrt{|tx^1+sx^2|}\}\ ds\, dt\leq\\
\frac{1}{2}\int\limits_0^1\int\limits_0^{1-t}\sqrt{(s+t)}\sqrt{|x^1|} ds\, dt=\\
\frac{\sqrt{|x^1|}}{5}\leq\sqrt{|x^1|}=\max\{g(x^1),g(x^2)\}.
\end{align*}
So, $g(x)=\sqrt{|x|}$ is quasi-convex.
\end{exa}

\subsection{Proving strong convexity  with Theorem \ref{T:main_theorem}}

The concept of strongly convex functions was introduced in \cite{Poljak1966} by Polyak who proved existence of solutions certain optimization problems. 

Let $I\subset\R$ be an interval. A function $g\colon I\to\R$ is said to be strongly convex with modulus $c$, where $c\geq 0$ is a constant, if
\[
g(tx^1+(1-t)x^2)\leq tg(x^1)+(1-t)g(x^2)-ct(1-t)|x^1-x^2|^2,
\] 
for every $x^1,x^2\in I$ and for for every $t\in[0,1]$. If the previous inequality is fulfilled with fixed $t=\frac12$, then $g$ is called strongly midconvex (see e.g. \cite{AzocarGimenezNikodemEtAl2011}).

In \cite[Lemma 2.1.]{NikodemPales2011} the authors gave the following nice characterization of strongly convex functions.
\begin{thm}\label{T:characterization_of_strong_convexity}
With the previous notations, a function $g$ is strongly convex(strongly midconvex) with modulus $c$ if and only if the function
\[
\varphi=g-c|\cdot|^2
\]
is convex(midconvex).
\end{thm}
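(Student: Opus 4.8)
The plan is to reduce the strong-convexity inequality for $g$ and the convexity inequality for $\varphi=g-c|\cdot|^2$ to one another by means of a single elementary identity for the square function, so that the two statements become literally the same inequality after rearrangement. The only computation that does any work is the observation that for all $x^1,x^2\in I$ and all $t\in[0,1]$,
\begin{equation}\label{E:square_identity}
\left|tx^1+(1-t)x^2\right|^2 = t|x^1|^2+(1-t)|x^2|^2-t(1-t)|x^1-x^2|^2.
\end{equation}
First I would verify \eqref{E:square_identity} by expanding the left-hand side; after subtracting $t|x^1|^2+(1-t)|x^2|^2$, the coefficients of $|x^1|^2$ and of $|x^2|^2$ each collapse to $-t(1-t)$ (since $t^2-t=(1-t)^2-(1-t)=-t(1-t)$), and together with the cross term these assemble precisely into $-t(1-t)|x^1-x^2|^2$.

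Next I would write down the convexity inequality for $\varphi$, namely
\[
\varphi(tx^1+(1-t)x^2)\le t\varphi(x^1)+(1-t)\varphi(x^2),
\]
and substitute $\varphi=g-c|\cdot|^2$ on both sides. Moving the terms carrying the factor $c$ to the right and using \eqref{E:square_identity} to rewrite $c|tx^1+(1-t)x^2|^2$, the quadratic contributions combine into exactly $-ct(1-t)|x^1-x^2|^2$. What remains is
\[
g(tx^1+(1-t)x^2)\le tg(x^1)+(1-t)g(x^2)-ct(1-t)|x^1-x^2|^2,
\]
which is precisely strong convexity of $g$ with modulus $c$. Since every manipulation in this passage is an equality valid for each fixed $x^1,x^2,t$ (the substitution and \eqref{E:square_identity} hold with equality), the convexity inequality for $\varphi$ and the strong-convexity inequality for $g$ are logically equivalent for each fixed triple, and quantifying over all admissible $x^1,x^2,t$ proves both implications simultaneously.

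Finally, for the parenthetical midconvex statement I would simply specialize the entire argument to $t=\tfrac12$: then \eqref{E:square_identity} reads $\left|\tfrac{x^1+x^2}{2}\right|^2=\tfrac12|x^1|^2+\tfrac12|x^2|^2-\tfrac14|x^1-x^2|^2$, and the same chain of equalities shows that midconvexity of $\varphi$ is equivalent to strong midconvexity of $g$ with modulus $c$. There is no genuine obstacle in this proof; the whole content is concentrated in the identity \eqref{E:square_identity}, and everything else is the routine bookkeeping of carrying the quadratic term across the inequality.
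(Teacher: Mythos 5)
Your proof is correct and complete: the identity $\left|tx^1+(1-t)x^2\right|^2 = t|x^1|^2+(1-t)|x^2|^2-t(1-t)|x^1-x^2|^2$ does all the work, and since every step is an equality the two implications (and the midconvex case at $t=\tfrac12$) follow at once. The paper itself gives no proof, quoting the result from Lemma 2.1 of the cited Nikodem--P\'ales paper, and your argument is exactly the standard one used there, so nothing further is needed.
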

Furthermore, a Bernstein-Doetsch type result is proved in \cite[Corollary 2.2.]{AzocarGimenezNikodemEtAl2011}.
\begin{thm}\label{T:BD_strong_convexity}
A continuous function is strongly convex with modulus $c$ if and only if it is strongly midconvex with modulus $c$.
\end{thm}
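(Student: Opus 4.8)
The plan is to reduce the assertion to the classical Bernstein--Doetsch theorem by exploiting the characterization recorded in Theorem \ref{T:characterization_of_strong_convexity}. The guiding idea is that strong convexity (respectively strong midconvexity) with modulus $c$ is nothing but ordinary convexity (respectively midconvexity) of the auxiliary function $\varphi=g-c|\cdot|^2$; hence the strong version of the Bernstein--Doetsch phenomenon should follow at once from the plain one, provided continuity is not lost in passing to $\varphi$.

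Concretely, I would first set $\varphi=g-c|\cdot|^2$ and observe that, since $g$ is continuous and $x\mapsto|x|^2$ is continuous, the function $\varphi$ is continuous on $I$ as well. By Theorem \ref{T:characterization_of_strong_convexity}, $g$ is strongly convex with modulus $c$ if and only if $\varphi$ is convex, and $g$ is strongly midconvex with modulus $c$ if and only if $\varphi$ is midconvex. Thus the equivalence to be proved is literally the same as the statement that the continuous function $\varphi$ is convex if and only if it is midconvex.

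The direction \emph{convex} $\Rightarrow$ \emph{midconvex} is immediate from the definitions by taking $t=\tfrac12$. For the converse I would invoke the classical Bernstein--Doetsch theorem: a midconvex function on an interval that is continuous (indeed, merely locally bounded from above at a single point) is automatically convex. Applying this to $\varphi$, which was shown to be continuous in the first step, yields convexity of $\varphi$ and hence, again by Theorem \ref{T:characterization_of_strong_convexity}, strong convexity of $g$ with modulus $c$. The only genuine input is the classical Bernstein--Doetsch result itself; everything else is a translation through the characterization theorem. Consequently the sole point requiring care—and the ``main obstacle,'' such as it is—is verifying that the continuity hypothesis on $g$ transfers to $\varphi$, which it plainly does because the subtracted term $c|\cdot|^2$ is continuous.
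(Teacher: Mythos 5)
Your proof is correct, but it is worth noting that the paper itself contains no proof of this statement: Theorem \ref{T:BD_strong_convexity} is imported verbatim from the literature, namely \cite[Corollary 2.2]{AzocarGimenezNikodemEtAl2011}, where it follows from a Bernstein--Doetsch type theorem established directly for strongly midconvex functions (there, local boundedness above at a single point already upgrades strong midconvexity to strong convexity). Your route instead assembles the result from ingredients the paper already has on record: the Nikodem--P\'ales characterization (Theorem \ref{T:characterization_of_strong_convexity}) reduces both strong convexity and strong midconvexity of $g$ to ordinary convexity, respectively midconvexity, of $\varphi=g-c|\cdot|^2$, and the classical Bernstein--Doetsch theorem for continuous midconvex functions (e.g.\ \cite[Theorem 7.1.1]{Kuczma2009}, which the paper itself invokes elsewhere) closes the loop. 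The translation is exact in both cases thanks to the identity $t|x|^2+(1-t)|y|^2-|tx+(1-t)y|^2=t(1-t)|x-y|^2$ and its $t=\tfrac12$ instance, and you correctly identify that the only hypothesis to transfer is continuity, which passes trivially to $\varphi$. What your argument buys is a short, self-contained derivation modulo two standard results; what the cited direct proof buys is the formally more general hypothesis of local boundedness above in place of continuity---though, as you remark parenthetically, that strengthening also survives your reduction, since Bernstein--Doetsch needs only local boundedness above at one point and this property likewise transfers to $\varphi$.
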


Let $g\colon I\to\R$ be a continuous strongly convex function with modulus $c$. Then, because of Theorem \ref{T:characterization_of_strong_convexity} there is a convex function $\varphi\colon I\to\R$ such that $\varphi=g-c|\cdot|^2$. Following the trail of thoughts at the beginning of subsection \ref{S:proving_convexity} with $\varphi$ we get that the function $f(x^1,x^2)=\varphi(x^1)+\varphi(x^2)$ fulfils inequality \eqref{E:main_result_assumption}, then, according to Theorem \ref{T:main_theorem}, it also fulfils inequality \eqref{E:main_result_statement}, which takes the following shape:
\[
2\varphi\left(\frac{x^1+x^2}{2}\right)\leq \varphi(x^1)+\varphi(x^2),\qquad x^1,x^2\in I,
\]
which can be transformed into
\[
g\left(\frac{x^1+x^2}{2}\right)-\frac{c}{4}|x^1+x^2|^2\leq \frac{g(x^1)+g(x^2)}{2}-\frac{c}{2}(|x^1|^2+|x^2|^2)
\]
Because
\[
-\frac{c}{2}(|x^1|^2+|x^2|^2)+\frac{c}{4}|x^1+x^2|^2=-\frac{c}{4}|x^1-x^2|^2,
\]
we get that $g$ is a continuous, strongly midconvex function with modulus $c$. Theorem \ref{T:BD_strong_convexity} implies the strong convexity of $g$.

So, we have to check the fulfilment of inequality \eqref{E:main_result_assumption}, that is to say:
\begin{align*}
\frac{1}{2}\int\limits_0^1\int\limits_0^{1-t}\big(g(sx^1+tx^2)-\frac{c}{4}|sx^1+tx^2|^2+\\g(tx^1+sx^2)-\frac{c}{4}|tx^1+sx^2|^2\big)\ ds\, dt\leq\\ g(x^1)-\frac{c}{4}|x^1|^2+g(x^2)-\frac{c}{4}|x^2|^2.
\end{align*}
More precisely, just as in the case of convexity (Section 5.1) it is equivalent to the following inequality:
\begin{align*}
\int\limits_0^1\int\limits_0^{1-t}\big(g(sx^1+tx^2)-\frac{c}{4}|sx^1+tx^2|^2+\big)\ ds\, dt\leq\\ g(x^1)-\frac{c}{4}|x^1|^2+g(x^2)-\frac{c}{4}|x^2|^2.
\end{align*}
Since the integral of the modulus part is equal to 
\begin{align*}
-\frac{c}{4}\int\limits_0^1\int\limits_0^{1-t}\big(|sx^1+tx^2|^2\big)\ ds\, dt\\= -\frac{c}{24}\left((x^1)^2+x^1x^2+(x^2)^2\right),
\end{align*}
the above-mentioned inequality can be rewitten as
\begin{align}\label{E:strongconvex}
\int\limits_0^1\int\limits_0^{1-t}\big(g(sx^1+tx^2)\big)\ ds\, dt\leq\nonumber\\ g(x^1)+g(x^2)+\\\frac{c}{48}(x^1x^2-11(x^1)^2-11(x^2)^2).\nonumber
\end{align}

\begin{exa}
Let $g(x)=\frac{1}{x}$ and $0<a<b<\infty$. We intend to prove that $g$ is strongly convex with modulus $c=\frac{1}{b^3}$. For this, we can perform the following estimates concerning the right hand side of \eqref{E:strongconvex}.
\begin{align*}
\int\limits_0^1\int\limits_0^{1-t}\left(\frac{1}{sx^1+tx^2}\right)\ ds\, dt\leq \int\limits_0^1\int\limits_0^{1-t}\left(\frac{1}{sx^1+tx^1}\right)\ ds\, dt=\\
\frac{1}{x^1}\int\limits_0^1\int\limits_0^{1-t}\frac{1}{s+t}\ ds\, dt=\frac{1}{x^1}.
\end{align*}
For the estimation of the left hand side of \eqref{E:strongconvex} we can write the followings.
\begin{align*}
\frac{1}{x^1}+\frac{1}{x^2}+\frac{1}{b^348}(x^1x^2-11(x^1)^2-11(x^2)^2)\geq\\ \frac{1}{x^1}+\frac{1}{x^2}+\frac{1}{b^348}(a^2-22b^2)\geq  \frac{1}{x^1}+\frac{1}{b}+\frac{a^2}{48b^3}-\frac{1}{2b}\geq\frac{1}{x^1}.
\end{align*}
Which implies that $g$ is really strong convex with modulus $\frac{1}{b^3}$ on the interval $[a,b]$.
\end{exa}

\section{Acknowledgement}

This research is supported by the Hungarian Scientific Research Fund (OTKA) Grant PD 12487.



\begin{thebibliography}{10}

\bibitem{Altomare1994}
Francesco Altomare and Michele Campiti.
\newblock {\em Korovkin-type approximation theory and its applications},
  volume~17 of {\em De Gruyter Studies in Mathematics}.
\newblock Walter de Gruyter \& Co., Berlin, 1994.
\newblock Appendix A by Michael Pannenberg and Appendix B by Ferdinand
  Beckhoff.

\bibitem{AzocarGimenezNikodemEtAl2011}
A.~Az\'{o}car, J.~Gim\'{e}nez, K.~Nikodem, and J.~L. S\'{a}nchez.
\newblock On strongly midconvex functions.
\newblock {\em Opuscula Math.}, 31(1):15--26, 2011.

\bibitem{Behringer1979}
F.~A. Behringer.
\newblock On {K}aramardian's theorem about lower semicontinuous strictly
  quasiconvex functions.
\newblock {\em Z. Oper. Res. Ser. A-B}, 23(1):A17--A48, 1979.

\bibitem{Bessenyei2008}
Mih\'{a}ly Bessenyei.
\newblock The {H}ermite-{H}adamard inequality on simplices.
\newblock {\em Amer. Math. Monthly}, 115(4):339--345, 2008.

\bibitem{Bessenyei2008a}
Mih\'{a}ly Bessenyei.
\newblock Hermite-{H}adamard-type inequalities for generalized convex
  functions.
\newblock {\em JIPAM. J. Inequal. Pure Appl. Math.}, 9(3):Article 63, 51, 2008.

\bibitem{Bessenyei2003}
Mih\'{a}ly Bessenyei and Zsolt P\'{a}les.
\newblock Hadamard-type inequalities for generalized convex functions.
\newblock {\em Math. Inequal. Appl.}, 6(3):379--392, 2003.

\bibitem{Bessenyei2006}
Mih\'{a}ly Bessenyei and Zsolt P\'{a}les.
\newblock Characterization of convexity via {H}adamard's inequality.
\newblock {\em Math. Inequal. Appl.}, 9(1):53--62, 2006.

\bibitem{Bohman1952}
Harald Bohman.
\newblock On approximation of continuous and of analytic functions.
\newblock {\em Ark. Mat.}, 2:43--56, 1952.

\bibitem{Burai2013}
P.~Burai.
\newblock Monotone operators and local-global minimum property of nonlinear
  optimization problems.
\newblock {\em Ann. Univ. Sci. Budapest. Sect. Comput.}, 40:151--158, 2013.

\bibitem{Burai2014}
P.~Burai.
\newblock Local-global minimum property in unconstrained minimization problems.
\newblock {\em J. Optim. Theory Appl.}, 2014.

\bibitem{BuraiMako2016}
P\'{a}l Burai and Judit Mak\'{o}.
\newblock On certain {S}chur-convex functions.
\newblock {\em Publ. Math. Debrecen}, 89(3):307--319, 2016.

\bibitem{Davis1979}
Philip~J. Davis.
\newblock {\em Circulant matrices}.
\newblock John Wiley \& Sons, New York-Chichester-Brisbane, 1979.
\newblock A Wiley-Interscience Publication, Pure and Applied Mathematics.

\bibitem{DragomirAgarwal1998}
S.~S. Dragomir and R.~P. Agarwal.
\newblock Two inequalities for differentiable mappings and applications to
  special means of real numbers and to trapezoidal formula.
\newblock {\em Appl. Math. Lett.}, 11(5):91--95, 1998.

\bibitem{Ellis1976}
Richard~S. Ellis.
\newblock Volume of an {$N$}-simplex by multiple integration.
\newblock {\em Elem. Math.}, 31(3):57--59, 1976.

\bibitem{GilanyiNikodemPales2004}
Attila Gil\'{a}nyi, Kazimierz Nikodem, and Zsolt P\'{a}les.
\newblock Bernstein-{D}oetsch type results for quasiconvex functions.
\newblock {\em Math. Inequal. Appl.}, 7(2):169--175, 2004.

\bibitem{Hadamard1893}
J.~Hadamard.
\newblock {\'Etude sur les propri\'et\'es des fonctions enti\`eres et en
  particulier d'une fonction consid\'er\'ee par Riemann}.
\newblock {\em {J.Math. Pures Appl.}}, 58:171--215, 1893.

\bibitem{Komornik2016}
Vilmos Komornik.
\newblock {\em Lectures on functional analysis and the {L}ebesgue integral}.
\newblock Universitext. Springer-Verlag, London, 2016.
\newblock Translated from the 2002 French original.

\bibitem{Korovkin1953}
P.~P. Korovkin.
\newblock On convergence of linear positive operators in the space of
  continuous functions.
\newblock {\em Doklady Akad. Nauk SSSR (N.S.)}, 90:961--964, 1953.

\bibitem{Korovkin1960}
P.~P. Korovkin.
\newblock {\em Linear operators and approximation theory}.
\newblock Translated from the Russian ed. (1959). Russian Monographs and Texts
  on Advanced Mathematics and Physics, Vol. III. Gordon and Breach Publishers,
  Inc., New York; Hindustan Publishing Corp. (India), Delhi, 1960.

\bibitem{Kuczma2009}
M.~Kuczma.
\newblock {\em An introduction to the theory of functional equations and
  inequalities}.
\newblock Birkh\"auser Verlag, Basel, second edition, 2009.
\newblock Cauchy's equation and Jensen's inequality, Edited and with a preface
  by Attila Gil{\'a}nyi.

\bibitem{MakoHazy2017}
Judit Mak\'{o} and Attila H\'{a}zy.
\newblock On approximate {H}ermite-{H}adamard type inequalities.
\newblock {\em J. Convex Anal.}, 24(2):349--363, 2017.

\bibitem{MakoPales2012}
Judit Mak\'{o} and Zsolt P\'{a}les.
\newblock Korovkin type theorems and approximate {H}ermite-{H}adamard
  inequalities.
\newblock {\em J. Approx. Theory}, 164(8):1111--1142, 2012.

\bibitem{Niculescu2012}
Constantin~P. Niculescu.
\newblock The {H}ermite-{H}adamard inequality for log-convex functions.
\newblock {\em Nonlinear Anal.}, 75(2):662--669, 2012.

\bibitem{Niculescu2003/04}
Constantin~P. Niculescu and Lars-Erik Persson.
\newblock Old and new on the {H}ermite-{H}adamard inequality.
\newblock {\em Real Anal. Exchange}, 29(2):663--685, 2003/04.

\bibitem{NikodemNikodem2009}
Kazimierz Nikodem and Mateusz Nikodem.
\newblock Remarks on {$t$}-quasiconvex functions.
\newblock {\em Math. Inequal. Appl.}, 12(4):711--717, 2009.

\bibitem{NikodemPales2011}
Kazimierz Nikodem and Zsolt P\'{a}les.
\newblock Characterizations of inner product spaces by strongly convex
  functions.
\newblock {\em Banach J. Math. Anal.}, 5(1):83--87, 2011.

\bibitem{Poljak1966}
B.~T. Poljak.
\newblock Existence theorems and convergence of minimizing sequences for
  extremal problems with constraints.
\newblock {\em Dokl. Akad. Nauk SSSR}, 166:287--290, 1966.

\bibitem{Sobczyk1992}
G.~E. Sobczyk.
\newblock Simplicial calculus with geometric algebra.
\newblock In {\em Clifford algebras and their applications in mathematical
  physics ({M}ontpellier, 1989)}, volume~47 of {\em Fund. Theories Phys.},
  pages 279--292. Kluwer Acad. Publ., Dordrecht, 1992.

\bibitem{TsengHwangDragomir2007}
Kuei-Lin Tseng, Shiow~Ru Hwang, and S.~S. Dragomir.
\newblock On some new inequalities of {H}ermite-{H}adamard-{F}ej\'{e}r type
  involving convex functions.
\newblock {\em Demonstratio Math.}, 40(1):51--64, 2007.

\end{thebibliography}


\end{document}